\documentclass{amsart}
\usepackage[utf8]{inputenc}
\usepackage{amsmath}
\usepackage{amssymb}
\usepackage{caption}
\usepackage{amsthm}
\usepackage[hidelinks]{hyperref}
\usepackage{cleveref}
\usepackage{hyperref}
\usepackage{nicefrac}
\usepackage[inline]{enumitem}
\usepackage{enumitem}
\usepackage{comment}
\usepackage{todonotes}

%% AMSART Style
\usepackage{amssymb}
\usepackage{mathrsfs}

\usepackage{array,float}

%%%%%%%%%%%%%%%%%%%%%%%%%%%%%%%%%%%%%%%%%%%%%%%%%%%%%%%%%%%%%%%%%%%%%%%%%%%%
\usepackage{tikz}
\usetikzlibrary{calc}
\usetikzlibrary{arrows.meta}
\usetikzlibrary{shapes,arrows}
\usetikzlibrary{fit,positioning}
\usetikzlibrary{patterns,decorations.pathreplacing}
\usepackage{xkeyval}
\usepackage{moreverb}
\usepackage{epic}
\usepgfmodule{shapes,plot,decorations}
%%%%%%%%%%%%%%%%%%%%%%%%%%%%%%%%%%%%%%%%%%%%%%%%%%%%%%%%%%%%%%%%%%%%%%%%%%%%

\usepackage{booktabs} 
\usepackage[normalem]{ulem}

%%%%%%%%%%%%%%%%%%%%%%%%%%%%%%%%%%%%

%\newcommand{\N}{\mathbb{N}}
%\newcommand{\Z}{\mathbb{Z}}

\newcommand{\HH}{\mathbb{H}}

\newcommand{\PO}{\mathbf{PO}}

\usepackage[warn]{mathtext}

\newcommand{\id}{\mathrm{Id}}

\newtheorem{theo}{Theorem}[section]
\newtheorem{cor}{Corollary}[theo]
\newtheorem{lem}[theo]{Lemma}
\newtheorem{prop}[theo]{Proposition}

\newtheorem{question}[theo]{Question}

\newtheorem{assumption}{Assumption}

\theoremstyle{remark}
\newtheorem{remark}[theo]{Remark}

\theoremstyle{remark}

\title[Commensurability classes of Coxeter polyhedra in $\mathbb{H}^4$ and $\mathbb{H}^5$]{Infinitely many commensurability classes of compact Coxeter polyhedra in $\mathbb{H}^4$ and $\mathbb{H}^5$}

\author{Nikolay Bogachev}
\address{Department of Computer and Mathematical Sciences, University of Toronto Scarborough, 1095 Military Trail, Toronto, ON M1C 1A4, Canada}
\email{n.bogachev@utoronto.ca}

\author{Sami Douba}
\address{Institut des Hautes \'Etudes Scientifiques, 
Universit\'e Paris-Saclay, 35 route de Chartres, 91440 Bures-sur-Yvette, France}
\email{douba@ihes.fr}

\author{Jean Raimbault}
\address{Institut de Mathématiques de Marseille, UMR 7373, CNRS, Aix-Marseille Université}
\email{jean.raimbault@univ-amu.fr}

\DeclareFontFamily{U}{wncy}{}
\DeclareFontShape{U}{wncy}{m}{n}{<->wncyr10}{}
\DeclareSymbolFont{mcy}{U}{wncy}{m}{n}
\DeclareMathSymbol{\Sha}{\mathord}{mcy}{"58}

\newcommand{\eps}{\varepsilon}

\newcommand{\pl}{\partial}

\newcommand{\tr}{\operatorname{tr}}

\newcommand{\ad}{\operatorname{Ad}}

\newcommand{\interval}[4]{
  \ifthenelse{ \equal{#1}{o} } {\mathopen{]}} {\mathopen{[}}
  #2, #3
  \ifthenelse{ \equal{#4}{o} } {\mathclose{[}} {\mathclose{]}}
}

\newcommand{\G}{\mathbf{G}}
\newcommand{\RR}{\mathbb R}
\newcommand{\ZZ}{\mathbb Z}

\newcommand{\QQ}{\mathbb Q}

\numberwithin{equation}{section}

\begin{document}

\maketitle

\begin{abstract}
We prove that certain families of compact Coxeter polyhedra in 4- and 5-dimensional hyperbolic space constructed by Makarov give rise to infinitely many commensurability classes of reflection groups in these dimensions. 
\end{abstract}

%%%%%%%%%%%%%%%

\section{Introduction}

Among the most attractive and concrete examples of discrete groups of isometries of hyperbolic space $\mathbb{H}^d$ are the hyperbolic reflection groups, i.e., those generated by reflections. That being said, the existence of {\em cocompact} such groups---in other words, the existence of compact hyperbolic Coxeter polyhedra---is ultimately a low-dimensional phenomenon. Indeed, Vinberg \cite{vinberg81nonexistence} showed that there are no such polyhedra in dimensions $d \geq 30$, and the highest dimension in which such a polyhedron has been exhibited is $8$; see Bugaenko \cite{bugaenko84, bugaenko92}.

We may nevertheless ask whether these polyhedra abound in the dimensions where they do exist. It is clear, for example, that a compact right-angled  polyhedron in $\mathbb{H}^d$ gives rise to infinitely many such polyhedra by successively doubling along walls. While compact right-angled polyhedra cease to exist in $\mathbb{H}^d$ as soon as $d > 4$ as follows from the Nikulin inequality \cite{nikulin81} (see \cite{potyagailovinberg}), this doubling method can be applied to a more general family of polyhedra than those that are right-angled, an observation that was exploited by Allcock \cite{All06} to construct infinitely many compact Coxeter polyhedra in $\mathbb{H}^d$ for each $d \leq 6$.
However, this method falls short of addressing the following question (see \cite{164099}).

\begin{question}\label{question}
Given $d <30$, are there infinitely many {\em commensurability classes} of compact Coxeter polyhedra in $\HH^d$?
\end{question}

\noindent Here, we say two compact Coxeter polyhedra in $\mathbb{H}^d$ are {\em commensurable} if the corresponding reflection groups $\Gamma_1, \Gamma_2 < \mathrm{Isom}(\mathbb{H}^d) \cong \PO(d,1)$ are commensurable (in the wide sense), that is, if there is some $g \in \PO(d,1)$ such that $\Gamma_1 \cap g \Gamma_2 g^{-1}$ has finite index in both $\Gamma_1$ and $g\Gamma_2 g^{-1}$. We will routinely conflate hyperbolic Coxeter polyhedra with their associated reflection groups.

The dimensions $d$ for which Question \ref{question} is known to have a positive answer include those in which one can find a sequence of compact hyperbolic Coxeter polyhedra with arbitrarily small dihedral angles, since the (adjoint) trace fields of such polyhedra (which are commensurability invariants) have unbounded degree. Such sequences are known to exist for $d=2$ by the Gauss--Bonnet theorem, and for $d=3$ by Andreev's theorem \cite{andreev1970, rhd07}, but their existence in higher dimensions is open. Note that there can even be uncountably many commensurability classes of compact hyperbolic Coxeter {\em polygons} of a fixed combinatorial type, whereas the combinatorics of a compact Coxeter polyhedron in $\mathbb{H}^d$ determine its geometry for $d \geq 3$ by Mostow rigidity.

A natural way (which may be viewed as a generalization of the doubling method) to try to build new polyhedra is to glue copies of smaller polyhedra along isometric facets, forming so-called {\em garlands}. For  $d=4,5$, Makarov \cite{Mak68} discovered compact Coxeter polyhedra $P_i^d \subset \mathbb{H}^d$, $i=1,2$, whose garlands yield infinitely many compact Coxeter polyhedra in $\mathbb{H}^d$. Felikson and Tumarkin \cite{feliksontumarkin2014} have suggested that Makarov's garlands in dimensions $4$ and $5$ constitute  infinite families even up to commensurability. We prove that this is indeed the case, and hence settle Question~\ref{question} in the affirmative for $d=4,5$. 

\begin{theo} \label{Main}
  There are infinitely many commensurability classes of compact Coxeter polyhedra in $\HH^d$ for $d=4,5$.
\end{theo}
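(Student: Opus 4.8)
The plan is to isolate a numerical commensurability invariant that is finite-to-one on a suitable infinite subfamily of Makarov's garlands. The adjoint trace field, used in the small-angle situations recalled above, is of no help here: a garland is assembled from the two fixed blocks $P_1^d, P_2^d$, so all of its dihedral angles, and hence all entries of its Gram matrix, lie in a fixed number field, forcing the trace fields of all garlands to have uniformly bounded degree. Instead I would use the covolume of the commensurator. Recall that if a lattice $\Gamma < \PO(d,1)$ is non-arithmetic, then by Margulis its commensurator $\Comm(\Gamma)$ is itself a lattice, and $\Comm(\Gamma)$ depends, up to conjugacy, only on the commensurability class of $\Gamma$; in particular $\vol(\Comm(\Gamma))$ is a commensurability invariant. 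The strategy is therefore: (i) select an infinite family of garlands $Q_n$ with $\vol(Q_n) \to \infty$; (ii) show that the associated reflection groups $\Gamma_n$ are non-arithmetic; and (iii) show that $\vol(\Comm(\Gamma_n)) \to \infty$, so that the $\Gamma_n$ represent infinitely many commensurability classes.

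For (i) and (iii) I would take the $Q_n$ to be \emph{linear} garlands (a path of $n$ blocks, as opposed to a closed necklace), so that $\vol(Q_n)$ grows linearly in $n$ while the symmetry group $\mathrm{Sym}(Q_n)$ of the polyhedron stays uniformly bounded. For a non-arithmetic reflection group the commensurator is governed by the maximal reflection group $\overline{\Gamma}_n$ in its class together with the symmetries of the corresponding Coxeter polyhedron, so $\vol(\Comm(\Gamma_n)) = \vol(\overline{P}_n)/|\mathrm{Sym}(\overline{P}_n)|$. Thus (iii) reduces to two claims: that $\mathrm{Sym}(Q_n)$ (hence the symmetry factor) is bounded, and that no unexpected reflective symmetry collapses $Q_n$ into a tiling by a uniformly small sub-polyhedron --- i.e. that $[\overline{\Gamma}_n : \Gamma_n]$ is controlled, so that $\vol(\overline{P}_n) \to \infty$. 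Arranging the two block types $P_1^d, P_2^d$ along the path in a sufficiently rigid (asymmetric, non-periodic) pattern should prevent such collapse; this is a concrete geometric and combinatorial verification resting on the fact that the internal gluing walls are generically not mirrors of $\overline{\Gamma}_n$.

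The main obstacle is step (ii): proving non-arithmeticity uniformly over the whole infinite family. I see two routes. The first is to apply Vinberg's arithmeticity criterion directly to the Gram matrix of $Q_n$: as $n$ grows one produces long cyclic products $b_{i_1 i_2} b_{i_2 i_3} \cdots b_{i_k i_1}$ of off-diagonal entries whose behaviour under a non-identity Galois embedding $\sigma$ of the trace field forces the conjugate form $\sigma(G)$ to fail to be positive definite (equivalently, the cyclic products fail the requisite integrality condition), which is impossible for an arithmetic group. The second, more structural route is a Gromov--Piatetski-Shapiro--type argument: if the blocks $P_1^d, P_2^d$ generate arithmetic groups lying in \emph{distinct} commensurability classes, then a garland containing both as geodesically embedded pieces cannot be arithmetic, since an arithmetic ambient group would force its two pieces to be arithmetic of the same type. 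Either route requires genuine input about Makarov's specific polyhedra---their Gram matrices, trace fields, and arithmetic type---and carrying it out uniformly in $n$, rather than block-by-block, is where the real work lies.
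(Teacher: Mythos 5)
Your high-level strategy (build garlands, show they are non-arithmetic hybrids, and separate them by the covolume of the commensurator) is in the right spirit, but both load-bearing steps are left unproved, and one of them rests on a false premise. For step (ii), your second route assumes that the blocks $P_1^d, P_2^d$ generate \emph{arithmetic} groups in distinct commensurability classes. This is true for $d=5$ but false for $d=4$: neither $P_1^4$ nor $P_2^4$ is arithmetic --- they are only properly \emph{quasi-arithmetic} (their adjoint trace field is $\QQ(\sqrt 5)$ and their ambient groups are admissible, but their traces are not all integral). The classical Gromov--Piatetski-Shapiro hybrid argument therefore does not apply off the shelf in dimension $4$; what is actually needed, and what the paper proves (Proposition \ref{similartothomson} and Corollary \ref{qa_hybrid}), is an extension of the GPS principle to quasi-arithmetic lattices: if two quasi-arithmetic lattices intersect in a Zariski-dense subgroup, their ambient groups coincide. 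One then checks by hand (Gram matrices, Hasse invariants at the prime above $5$) that $\PO(Q_1^4)$ and $\PO(Q_2^4)$ are not $\QQ(\sqrt5)$-isogenous, and similarly in dimension $5$ via determinants. Your first route (Vinberg's criterion applied to ever longer cyclic products) is both uncarried-out and insufficient: mere non-arithmeticity of each garland is weaker than what your step (iii) needs.

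Step (iii) is the real gap. Non-arithmeticity of $\Gamma_n$ makes $\Comm(\Gamma_n)$ a lattice, but to conclude $\vol(\Comm(\Gamma_n)) \to \infty$ you must bound the index $[\Comm(\Gamma_n):\Gamma_n]$, i.e.\ rule out that $Q_n$ is tiled by copies of a uniformly small orbifold. Bounding $\mathrm{Sym}(Q_n)$ does not do this, and ``arranging the blocks in a rigid pattern should prevent such collapse'' is precisely the assertion requiring proof. The only available mechanism is again a GPS-type cut-and-match argument: because the two pieces have distinct ambient groups, any common finite cover of two garlands must carry the preimage of the cutting hypersurfaces to itself, hence induces a type-preserving matching of the pieces; this is Lemma 3.2 of \cite{croissmax} combined with Corollary \ref{qa_hybrid}, which is exactly what the paper invokes. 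Once that is in hand the commensurator detour is unnecessary: in dimension $5$ one compares the volumes of the matching preimages of the single $P_1^5$-cap to equate covering degrees (Proposition \ref{commclass_garlands}), and in dimension $4$ one reads off that the gluing word $\beta$ must be a subword of $(\alpha,\bar\alpha)$ (Proposition \ref{commclass_garlands2}), which even yields the exponential count of Theorem \ref{quantitative}. So the idea missing from your proposal is precisely the paper's key technical input, and supplying it would collapse your argument into theirs.
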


\noindent Indeed, by applying techniques of the last author \cite{croissmax} to Makarov's garlands in dimension $4$, we obtain the following quantitative result (compare with \cite[Thm.~1.2]{All06} and \cite[Cor.~1.2]{croissmax}).

\begin{theo}\label{quantitative}
There is some $c>1$ such that, for $V$ sufficiently large, the number of pairwise incommensurable compact Coxeter polyhedra in $\mathbb{H}^4$ of volume $\leq V$ is at least $c^V$.
\end{theo}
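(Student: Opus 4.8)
The plan is to prove the quantitative statement by a counting argument of the same shape as in \cite{All06} and \cite{croissmax}: first produce an exponentially large family of pairwise non-isometric compact Coxeter garlands in $\HH^4$ of volume $\leq V$, and then bound from above the number of them lying in any single commensurability class. If the first count is at least $c_1^V$ and the second is at most $c_2^V$ with $c_2<c_1$, then dividing yields at least $(c_1/c_2)^V$ commensurability classes, giving the theorem with $c=c_1/c_2$.

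For the lower bound, fix one of Makarov's blocks $P=P_i^4$ together with the facets along which copies of $P$ may be isometrically glued to again produce a compact Coxeter polyhedron. Since gluing takes place along measure-zero facets, volume is additive, so a garland assembled from $n$ copies of $P$ has volume exactly $n\vol(P)$; taking $n=\lfloor V/\vol(P)\rfloor$, every garland on $n$ blocks has volume $\leq V$. The combinatorial freedom in the assembly --- the choice, at each gluing site, of which admissible facet to attach the next block to and in which isometric position --- produces at least $a^n$ distinct assembly patterns for some $a>1$, which is precisely the source of Allcock's exponential count. I would then show these garlands are pairwise non-isometric by recovering the assembly pattern intrinsically: the gluing facets are combinatorially distinguished among the facets of the garland (they are the facets whose removal disconnects the block decomposition), so the underlying tree of blocks, together with its gluing data, is an isometry invariant. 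This yields at least $c_1^V$ pairwise non-isometric, hence pairwise non-conjugate, reflection groups, with $c_1=a^{1/\vol(P)}$.

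For the per-class upper bound --- where the techniques of \cite{croissmax} enter --- I would invoke Margulis's commensurator dichotomy. If a garland $\Gamma$ is non-arithmetic, its commensurability class contains a unique maximal lattice $C=\Comm(\Gamma)$, and every commensurable garland $\Gamma'$ is a finite-index subgroup with $[C:\Gamma']=\vol(\Gamma')/\vol(C)\leq V/\vol(C)$. The naive count of finite-index subgroups is far too crude, since a lattice has super-exponentially many of them; the essential point is that one counts only \emph{reflective} subgroups with garland combinatorics, whose walls lie in the fixed discrete $C$-invariant mirror arrangement of the class. A garland of volume $\leq V$ uses $O(V)$ such walls, and bounding the number of admissible wall-subsets bounding such a polyhedron gives a per-class count of at most $c_2^V$. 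The arithmetic garlands must be handled separately, but more easily: every garland is defined over the fixed adjoint trace field of $P$, so only finitely many arithmetic commensurability classes occur, and within each the reflective lattices of bounded covolume are again controlled by \cite{croissmax}.

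The crux, and the only place where genuine work beyond \cite{All06} is required, is this per-class bound together with the strict inequality $c_2<c_1$: one must verify that the gluing construction manufactures strictly more garlands than can arise as reflective sublattices of bounded index in a single maximal lattice, i.e.\ that the growth rate of assembly patterns exceeds that of reflective subgroups of a fixed lattice $C$ of bounded covolume. I expect this comparison of exponential rates, rather than either count in isolation, to be the main obstacle; it is exactly the quantitative heart supplied by \cite{croissmax}, and it also forces one to keep the covolumes $\vol(C)$ of the relevant maximal lattices bounded below uniformly, so that the index estimate $[C:\Gamma']\leq V/\vol(C)$ is effective simultaneously across all the classes that occur.
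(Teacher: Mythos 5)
There is a genuine gap, and it occurs at both ends of your division argument. First, your lower bound: you fix a \emph{single} Makarov block $P=P_i^4$ and claim ``at least $a^n$ distinct assembly patterns'' from the choice of gluing facet and isometric position. But each $P_i^4$ has exactly two distinguished facets (both isometric to the simplex $T_2$ and orthogonal to all adjacent facets), so gluing $n$ copies of one block produces a linear chain that is unique up to isometry --- there is no exponential freedom at all in the one-block construction. (Allcock's exponential count in \cite{All06} comes from doubling along varying subsets of walls, a different mechanism, and in any case produces polyhedra not known to be pairwise incommensurable, which is the whole point at issue.) The exponential family in the actual argument comes from using \emph{both} blocks $P_1^4$ and $P_2^4$ and gluing them in a chain according to an arbitrary word $\alpha\in\{1,2\}^n$; this gives $2^n$ garlands of volume $O(n)$.

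Second, and more seriously, your per-class upper bound is not an argument but a restatement of the difficulty: counting reflective finite-index subgroups ``with garland combinatorics'' inside a fixed maximal lattice, uniformly over all occurring commensurability classes, is not something \cite{croissmax} provides, and you yourself flag the comparison $c_2<c_1$ as the unproven crux. The paper's route avoids any such count. The decisive input is that $\Gamma_{P_1^4}$ and $\Gamma_{P_2^4}$ are quasi-arithmetic with \emph{distinct ambient groups} (distinguished via Hasse invariants of the associated quadratic forms over $\QQ(\sqrt5)$). A Gromov--Piatetski-Shapiro--type argument (the quasi-arithmetic analogue of \cite[1.6]{GPS}, via Zariski-density of intersections) then shows that the interfaces between $P_1^4$-pieces and $P_2^4$-pieces are canonical in any common cover, so that if $\Gamma_{P_\alpha}$ and $\Gamma_{P_\beta}$ are commensurable the word $\beta$ must be, up to reversal, a cyclic subword of $(\alpha,\bar\alpha)$. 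Hence each commensurability class contains at most $2n$ of the $2^n$ words, giving at least $2^n/(2n)\geq c^V$ classes directly --- a rigidity statement about which garlands can be commensurable, rather than a subgroup count within a maximal lattice. Without the two-block mixing and the distinct-ambient-group input, your proposal has no mechanism for telling commensurability classes apart.
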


In view of Theorem \ref{Main}, the only dimension $d$ in which infinitely many compact hyperbolic Coxeter polyhedra have been exhibited but for which Question \ref{question} remains open is $d=6$. We remark that, while in any infinite family of pairwise incommensurable compact hyperbolic Coxeter polyhedra only finitely many members are arithmetic \cite{ABSW,Nikulin,Fisher_Hurtado,coxeter_finiteness}, not a single nonarithmetic compact Coxeter polyhedron in $\mathbb{H}^6$ is known, as pointed out by Vinberg \cite{vinberg2014}.

When we set out to prove Theorem \ref{Main}, our initial hope was that the polyhedra $P_1^d$ and $P_2^d$ were arithmetic and incommensurable, so that we could directly apply arguments of the last author \cite{croissmax}, which in turn build on techniques used by Gromov and Piatetski-Shapiro \cite{GPS} to verify nonarithmeticity of their ``hybrid" lattices. While this is indeed the case for $d=5$, it turns out that neither of the~$P_i^4$ are arithmetic. The $P_i^d$ are nevertheless all {\em quasi-arithmetic} (see Section \ref{QAlattices} for the definition, as well as \cite{BK22} for a broader discussion on quasi-arithmetic reflection groups). Arguments as in \cite{croissmax} then go through once we have verified that $P_1^d$ and~$P_2^d$ are not only incommensurable, but that their ambient groups are  distinct; for $d=4$, the latter, as well as proper quasi-arithmeticity of the $P_i^4$, were in fact already established by Dotti \cite[Example~3.16]{dotti}. (Note that to say two lattices $\Gamma_1, \Gamma_2 < \PO(d,1)$ have distinct ambient groups is the same as saying that the $\Gamma_i$ are incommensurable {\em in the case that the lattices are arithmetic}.) By distinguishing the ambient groups of the $P_i^d$, we can also conclude the following (compare with Thomson \cite[Thm.~1.6]{Thomson}, cf. also \cite[Thm.~1.1]{BGV23}).

\begin{theo} \label{non-quasi-arit}
  A Makarov garland in $\mathbb{H}^d$, $d=4,5$, containing both a $P_1^d$ piece and a $P_2^d$ piece is {\em not} quasi-arithmetic, so that we may take all the polyhedra in Theorem \ref{Main} (and Theorem \ref{quantitative}) to be non-quasi-arithmetic.
\end{theo}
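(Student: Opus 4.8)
The plan is to argue by contradiction, following the strategy of Gromov--Piatetski-Shapiro \cite{GPS} as adapted to the quasi-arithmetic setting by the last author \cite{croissmax}. Suppose that some garland $P$ containing both a $P_1^d$-piece $Q$ and a $P_2^d$-piece $Q'$ were quasi-arithmetic. By the definition recalled in Section~\ref{QAlattices}, the reflection group $\Gamma_P$ would then admit an ambient group $\mathbf{G} = \mathbf{PO}(f)$ defined over its (totally real) field of definition $K$, with $\Gamma_P \subset \mathbf{G}(K)$. Writing $\mathbf{G}_1, \mathbf{G}_2$ for the ambient groups of $P_1^d, P_2^d$ respectively---which are distinct by the discussion preceding the theorem, and for $d=4$ by \cite[Example~3.16]{dotti}---the goal is to locate inside $\Gamma_P$ enough of each piece to force $\mathbf{G} = \mathbf{G}_1$ and $\mathbf{G} = \mathbf{G}_2$, yielding a contradiction.

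To this end, let $\Lambda \leq \Gamma_P$ be the subgroup generated by the reflections in those facets of $Q$ that survive as facets of $P$ (the facets of $Q$ not consumed in the gluing), and define $\Lambda' \leq \Gamma_P$ analogously from $Q'$. By construction $\Lambda$ is a subgroup of the reflection group of $Q \cong P_1^d$, and hence is contained in $\mathbf{G}_1(K_1)$, where $K_1$ is the field of definition of $P_1^d$. The first key step is to verify that $\Lambda$ (and likewise $\Lambda'$) is Zariski dense in $\mathbf{PO}(d,1)$ and still has field of definition $K_1$; granting this, the adjoint trace field and the invariant quadratic form (up to similarity) of a Zariski-dense subgroup are determined by its traces, so the ambient group of $\Lambda$ computed inside $\mathbf{G}_1(K_1)$ is exactly $\mathbf{G}_1$. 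On the other hand, $\Lambda \subset \Gamma_P \subset \mathbf{G}(K)$, so the same rigidity identifies the ambient group of $\Lambda$ with $\mathbf{G}$. Hence $\mathbf{G} = \mathbf{G}_1$, and symmetrically $\mathbf{G} = \mathbf{G}_2$, contradicting $\mathbf{G}_1 \neq \mathbf{G}_2$. This argument is uniform in $d = 4, 5$, requiring only the established distinctness of the ambient groups in each dimension.

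The main obstacle is precisely the Zariski density (and the preservation of the field of definition) of $\Lambda$ and $\Lambda'$: deleting the glued facets discards some of the mirrors of the piece, and one must ensure that the surviving reflections still generate a non-elementary, irreducible subgroup of $\mathbf{PO}(d,1)$---equivalently, that they fix no point of $\overline{\mathbb{H}^d}$ and preserve no proper totally geodesic subspace---and that the cyclic products over the surviving facets still generate all of $K_i$. Since each Makarov piece is glued along only a bounded number of its facets, both points can be read off from the Coxeter diagrams of $P_1^d$ and $P_2^d$: deleting the nodes corresponding to the gluing facets should leave a diagram of full rank $d+1$, whose normals span $\mathbb{R}^{d,1}$ and whose reflections generate a Zariski-dense group with unchanged field of definition. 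Verifying this on Makarov's explicit diagrams is the computational heart of the proof.

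Finally, since the infinitely many commensurability classes produced in Theorem~\ref{Main} (and the exponentially many of Theorem~\ref{quantitative}) are realized by garlands, and these may be chosen to contain at least one $P_1^d$-piece and one $P_2^d$-piece, the contradiction above shows that all of them may be taken to be non-quasi-arithmetic.
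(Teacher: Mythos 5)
Your argument is correct and is essentially the paper's own proof: assume quasi-arithmeticity of the garland, use the Zariski-dense reflection subgroups of $\Gamma_P$ coming from a $P_1^d$-piece and a $P_2^d$-piece to force the ambient group of $\Gamma_P$ to be $k$-isogenous to both $\mathbf{G}_1$ and $\mathbf{G}_2$ via Proposition \ref{similartothomson}, and contradict the computed distinctness of those groups. The one place you do more work than needed is the proposed recomputation of cyclic products over the surviving facets: as in the proof of Corollary \ref{qa_hybrid}, once Zariski density of the piece subgroup is known, admissibility already forces its adjoint trace field to be all of $\mathbb{Q}(\sqrt{5})$, since a proper subfield would produce a nontrivial Galois embedding sending the subgroup into a compact group.
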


We remark that it is not known for any $3 \leq d < 30$ whether there are infinitely many commensurability classes of quasi-arithmetic Coxeter polyhedra in~$\mathbb{H}^d$. On the other hand, and in contrast to the arithmetic setting, Dotti and Kolpakov~\cite{dottikolpakov2022} exhibited infinitely many commensurability classes of quasi-arithmetic compact hyperbolic Coxeter {\em polygons}, and indeed, a family of such polygons the degrees of whose trace fields are unbounded. It follows from work of Mila \cite{Mila} that there are only finitely many possibilities for the trace fields of Makarov's garlands, so that we do not address the following variant of Question \ref{question}.

\begin{question}\label{question_tracefield}
Given $4 \leq d <30$, are there infinitely many options for the trace fields of compact Coxeter polyhedra in $\HH^d$?
\end{question}

\noindent As observed in \cite{dottikolpakov2022}, it follows from work of Vinberg \cite{vinberg84absence} that there are only finitely many options for the trace fields of compact quasi-arithmetic Coxeter polyhedra (if any) in $\mathbb{H}^d$ for $d \geq 14$.

%%%%%%%%%%

\subsection*{Structure of the paper} In Section \ref{QAlattices} we review trace fields and ambient groups for discrete groups of isometries and prove some facts about quasi-arithmetic lattices. We describe Makarov's polyhedra in Section \ref{desc_polytopes} and compute their trace fields and ambient groups there. In Section \ref{proofs} we deduce Theorems \ref{Main},  \ref{quantitative}, and  \ref{non-quasi-arit}. 

%%%%%%%%%%

\subsection*{Acknowledgments} We thank Pavel Tumarkin for pointing out to us Dotti's example \cite[Example~3.16]{dotti}. We are  grateful to Daniel Allcock and Pierre Py for pointing out misprints in a previous draft of this article. We also thank the referee for their reading of this article and for their comments. S.~D. was supported by the Huawei Young Talents Program. J.~R. was supported by grant AGDE - ANR-20-CE40-0010-01.

%%%%%%%%%%%%%%%%%%%%%%%%%%%%%%

\section{Quasi-arithmetic lattices}\label{QAlattices}

\subsection{Definitions}

We very briefly review the definitions and facts we will need about quasi-arithmetic lattices. See \cite[Section 2]{BBKS} for a more in-depth introduction to the topic. 

Let $G$ be a noncompact semisimple real Lie group and $k \subset \RR$ a totally real number field. A $k$-group $\G$ is said to be {\em admissible} for $G$ if $\G(\RR)$ is isogenous to $G$ and $\G(k \otimes_\sigma \RR)$ is compact for any non-identity embedding $\sigma :\: k \to \RR$.

A lattice $\Gamma$ in $G$ is said to be {\em quasi-arithmetic} if there exist $k$ and $\G$ with $\G$ admissible for $G$ and $\Gamma \cap \G(k)$ a lattice in $G$. If one can find such $k$ and $\G$ such that moreover $\Gamma \cap \G(\mathcal{O}_k)$ is a lattice in $G$, i.e., $\Gamma$ is commensurable to $\G(\mathcal{O}_k)$, where $\mathcal{O}_k$ is the ring of integers of $k$, then $\Gamma$ is {\em arithmetic}. If $\Gamma$ is quasi-arithmetic but not arithmetic, one says $\Gamma$ is {\em properly quasi-arithmetic}. 

Another characterization of quasi-arithmeticity is as follows. Let $\Gamma$ be a lattice in~$G$, and $k(\Gamma)$ the subfield of $\RR$ generated by the traces $\tr(\ad g)$ for $g \in \Gamma$. There is (up to $k(\Gamma)$-isogeny) a unique $k(\Gamma)$-group $\G_\Gamma$ such that $\G_\Gamma(\mathbb{R})$ is~isogenous to~$G$ and~$\Gamma$ is virtually contained in $\G_\Gamma(k(\Gamma))$ via this isogeny. The field $k(\Gamma)$ and the group~$\G_\Gamma$ are the {\em adjoint trace field} and the {\em ambient group} of $\Gamma$, respectively. (See~\cite{Vinberg_trace} for the existence and unicity of these invariants, the so-called {\em Vinberg invariants} of $\Gamma$.) Then $\Gamma$ is quasi-arithmetic if and only if $k(\Gamma)$ is a totally real number field and~$\G_\Gamma$ is admissible for $G$. If $\Gamma$ is quasi-arithmetic, then $\Gamma$ is arithmetic if and only if  $\tr(\ad g)$ is an algebraic integer for each $g \in \Gamma$.

We remark, although this will not be used in the sequel, that it follows from well-known results that the semisimple Lie groups containing irreducible properly quasi-arithmetic lattices are (up to isogeny) exactly the groups $\PO(d, 1)$. Indeed, for higher-rank groups and $\mathrm{PSp}(d, 1)$, $\mathrm{F}_4^{-20}$ all irreducible lattices are arithmetic by Margulis \cite[Ch.~IX]{margulis91discrete} and Corlette and Gromov--Schoen \cite{corlette, gromovschoen}, and for lattices in $\mathrm{PU}(d,1)$ ($d\ge 2$) traces are always integral (as proven in \cite[Theorem 1.3.1]{baldiullmo} or \cite[Theorem 1.5]{bfms} using work of Esnault--Groechenig \cite[Thm.~1.1]{EG2018}). On the other hand, as observed by Thomson~\cite{Thomson}, it follows from work of Belolipetsky--Thomson \cite{belolipetskythomson}, or a result of Bergeron--Haglund--Wise \cite{bergeronhaglundwise}, that a construction of Agol \cite{agol2006systoles} yields properly quasi-arithmetic lattices in $\PO(d,1)$ for each $d \geq 2$.

%%%%%%%%%%%%%%%

\subsection{Algebraic rigidity of quasi-arithmetic lattices}

The following result is similar to \cite[Proposition 3.2]{Thomson}.  

\begin{prop}\label{similartothomson}
Let $G$ be a semisimple Lie group and suppose for some subfield $k \subset \mathbb{R}$ one has $k$-groups ${\bf G}_1, {\bf G}_2$ and Lie group isomorphisms $\rho_i :{\bf G}_i(\mathbb{R}) \rightarrow G$, $i= 1,2$, such that $\rho_1({\bf G}_1(k)) \cap \rho_2({\bf G}_2(k))$ is Zariski-dense in~$G$. Then the ${\bf G}_i$ are $k$-isogenous.
\end{prop}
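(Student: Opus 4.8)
The plan is to produce a $k$-isomorphism between the adjoint groups $\mathbf{G}_1^{\mathrm{ad}}$ and $\mathbf{G}_2^{\mathrm{ad}}$; since the canonical isogenies $\mathbf{G}_i \to \mathbf{G}_i^{\mathrm{ad}}$ are defined over $k$, this suffices to conclude that $\mathbf{G}_1$ and $\mathbf{G}_2$ are $k$-isogenous. Write $\Lambda = \rho_1(\mathbf{G}_1(k)) \cap \rho_2(\mathbf{G}_2(k))$ and $\Lambda_i = \rho_i^{-1}(\Lambda) \subseteq \mathbf{G}_i(k)$. Since $G$ is semisimple and $\mathbf{G}_i(\mathbb{R}) \cong G$, each $\mathbf{G}_i$ is a (connected) semisimple $k$-group, and the Zariski-density of $\Lambda$ in $G$ means precisely that $\Lambda_i$ is Zariski-dense in $\mathbf{G}_i$. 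The isomorphism $\phi := \rho_2^{-1}\circ\rho_1 : \mathbf{G}_1(\mathbb{R}) \to \mathbf{G}_2(\mathbb{R})$ is then an isomorphism of real Lie groups carrying $\Lambda_1$ onto $\Lambda_2$.

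First I would promote $\phi$ to an algebraic isomorphism over $\mathbb{R}$. Being continuous, $\phi$ is smooth, and its differential $d\phi$ is an isomorphism of the real semisimple Lie algebras $\mathfrak{g}_1 \otimes_k \mathbb{R} \to \mathfrak{g}_2 \otimes_k \mathbb{R}$. Hence conjugation $\Psi \mapsto d\phi \circ \Psi \circ d\phi^{-1}$ defines an isomorphism of real algebraic groups $\mathrm{Aut}(\mathfrak{g}_1 \otimes_k \mathbb{R}) \to \mathrm{Aut}(\mathfrak{g}_2 \otimes_k \mathbb{R})$, which restricts on identity components to an $\mathbb{R}$-isomorphism $\Phi : \mathbf{G}_1^{\mathrm{ad}} \to \mathbf{G}_2^{\mathrm{ad}}$ (base-changed to $\mathbb{R}$), using the identification $\mathbf{G}_i^{\mathrm{ad}} = \mathrm{Aut}(\mathfrak{g}_i)^\circ$. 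Since $\phi$ is a homomorphism, $d\phi \circ \ad(g) = \ad(\phi(g)) \circ d\phi$ for all $g \in \mathbf{G}_1(\mathbb{R})$, so $\Phi$ is compatible with $\phi$ through the adjoint isogenies $\pi_i : \mathbf{G}_i \to \mathbf{G}_i^{\mathrm{ad}}$; in particular $\Phi$ carries the Zariski-dense subgroup $\overline{\Lambda}_1 := \pi_1(\Lambda_1) \subseteq \mathbf{G}_1^{\mathrm{ad}}(k)$ onto $\overline{\Lambda}_2 := \pi_2(\Lambda_2) \subseteq \mathbf{G}_2^{\mathrm{ad}}(k)$.

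It then remains to descend $\Phi$ from $\mathbb{R}$ to $k$, which I would do by Galois descent. Fix an algebraically closed field $\Omega \supseteq \mathbb{R}$ of infinite transcendence degree over $k$ (so $\Omega$ is a universal domain relative to $k$; when $k$ is a number field one may simply take $\Omega = \mathbb{C}$), and regard $\Phi$ as a morphism $\mathbf{G}_1^{\mathrm{ad}} \to \mathbf{G}_2^{\mathrm{ad}}$ of $\Omega$-varieties. For $\sigma \in \mathrm{Aut}(\Omega/k)$ and $g \in \overline{\Lambda}_1 \subseteq \mathbf{G}_1^{\mathrm{ad}}(k)$ one computes $({}^\sigma\Phi)(g) = \sigma\big(\Phi(\sigma^{-1}(g))\big) = \sigma\big(\Phi(g)\big) = \Phi(g)$, the last two equalities using that $g$ and $\Phi(g) \in \overline{\Lambda}_2 \subseteq \mathbf{G}_2^{\mathrm{ad}}(k)$ are $k$-rational and hence fixed by $\sigma$. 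Thus ${}^\sigma\Phi$ and $\Phi$ agree on the Zariski-dense set $\overline{\Lambda}_1$ and therefore coincide as morphisms of $\Omega$-varieties. As this holds for every $\sigma \in \mathrm{Aut}(\Omega/k)$, the morphism $\Phi$ is defined over $k$, yielding the desired $k$-isomorphism $\mathbf{G}_1^{\mathrm{ad}} \cong \mathbf{G}_2^{\mathrm{ad}}$.

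The main obstacle is the second step: one must know that the merely continuous isomorphism $\phi$ of real Lie groups is genuinely algebraic, i.e.\ arises from a morphism of algebraic groups over $\mathbb{R}$. Passing to adjoint groups and invoking $\mathbf{G}_i^{\mathrm{ad}} = \mathrm{Aut}(\mathfrak{g}_i)^\circ$ is what makes this transparent, since an isomorphism of real semisimple Lie algebras is automatically an $\mathbb{R}$-point of the relevant isomorphism variety; this is also why the natural conclusion is a $k$-\emph{isogeny} rather than a $k$-isomorphism of the given groups. Once $\Phi$ is known to be algebraic, the descent is forced by the Zariski-density hypothesis, as in \cite[Proposition 3.2]{Thomson}.
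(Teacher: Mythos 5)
Your argument is correct, but it follows a genuinely different route from the one in the paper. The paper's proof first invokes Vinberg's theorem on adjoint trace fields \cite{Vinberg_trace} to find a basis of $\mathfrak{g}$ in which $\mathrm{Ad}(\Lambda)$ has entries in $k$ (possible because $\rho_1^{-1}(\Lambda)\subset\mathbf{G}_1(k)$ forces the adjoint trace field of $\Lambda$ into $k$); it then takes $\mathbf{G}$ to be the $k$-Zariski closure of $\mathrm{Ad}(\Lambda)$ and shows that both $\mathbb{R}$-isogenies $\mathrm{Ad}\circ\rho_i:\mathbf{G}_i\to\mathbf{G}$ are in fact $k$-isogenies, citing \cite[Prop.~3.1.10]{Zimmer_book} for the descent. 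You instead bypass Vinberg's theorem entirely: rather than manufacturing a $k$-structure on a common target from the traces of $\Lambda$, you use the $k$-structures the adjoint groups already carry as $\mathrm{Aut}(\mathfrak{g}_i)^\circ$, compare them directly via the algebraic map induced by $d\phi$ where $\phi=\rho_2^{-1}\circ\rho_1$, and then carry out by hand the same descent principle (an $\Omega$-morphism of $k$-groups matching Zariski-dense sets of $k$-points is defined over $k$) that the paper delegates to Zimmer. Your version is more self-contained and never leaves the category of the given $k$-groups, at the cost of some care with universal domains when $k$ is not a number field; the paper's version is shorter modulo its references and produces the common isogeny target $\mathbf{G}$ explicitly as the $k$-closure of $\mathrm{Ad}(\Lambda)$, which matches the ``ambient group'' formalism used in the rest of the article. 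Both arguments establish the same form of the conclusion, namely a common $k$-group onto which both $\mathbf{G}_i$ admit $k$-isogenies.
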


\begin{proof}
Let $\Lambda = \rho_1({\bf G}_1(k)) \cap \rho_2({\bf G}_2(k))$ and $\mathrm{Ad}: G \rightarrow \mathrm{GL}(\mathfrak{g})$ be the adjoint representation of $G$, where $\mathfrak{g}$ denotes the Lie algebra of $G$. The adjoint trace field of~$\Lambda$ is contained in $k$ 
 since $\rho_1^{-1}(\Lambda) \subset {\bf G}_1(k)$. By \cite[Thm.~1]{Vinberg_trace}, it follows that there is a basis for $\mathfrak{g}$ with respect to which $\mathrm{Ad}(\Lambda)$ has entries in $k$. Identify $\mathrm{GL}(\mathfrak{g})$ with $\mathrm{GL}_n(\mathbb{R})$ via this basis, where $n= \mathrm{dim}(\mathfrak{g})$. 

 Since $\Lambda$ is assumed to be Zariski-dense in $G$, we have that $\mathrm{Ad}(G) \subset {\bf G}(\mathbb{R})$, where~${\bf G}$ is the $k$-closure of $\mathrm{Ad}(\Lambda)$. Now each $\mathrm{Ad}\circ \rho_i : {\bf G}_i \rightarrow {\bf G}$ is an $\mathbb{R}$-isogeny mapping the Zariski-dense subgroup $\rho_i^{-1}(\Lambda)$ of ${\bf G}_i(k)$ into ${\bf G}(k)$. It follows that $\mathrm{Ad}\circ \rho_i$ is moreover a $k$-isogeny for $i=1,2$; see, for instance, \cite[Prop.~3.1.10]{Zimmer_book}.
\end{proof}

This implies the following generalisation of \cite[1.6]{GPS} to quasi-arithmetic lattices. 

\begin{cor} \label{qa_hybrid}
  If $\Gamma_1$ and $\Gamma_2$ are two quasi-arithmetic lattices in $G$ and $\Gamma_1 \cap \Gamma_2$ is Zariski-dense in $G$ then the ambient groups of $\Gamma_1$ and $\Gamma_2$ coincide. 
\end{cor}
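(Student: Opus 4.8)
The plan is to reduce to Proposition~\ref{similartothomson} by passing to a common field of definition, and then to use admissibility to show that the two adjoint trace fields in fact coincide. Throughout I would replace $G$ and the $\Gamma_i$ by their images under the adjoint representation, so that the ambient groups $\mathbf{G}_{\Gamma_i}$ may be taken to be adjoint and the defining isogenies $\mathbf{G}_{\Gamma_i}(\mathbb{R}) \to G$ become the isomorphisms $\rho_i$ required in the statement of Proposition~\ref{similartothomson} (an isogeny of adjoint groups is an isomorphism). Write $F_i = k(\Gamma_i)$ for the adjoint trace fields, which are totally real since the $\Gamma_i$ are quasi-arithmetic, and let $k = F_1 F_2 \subseteq \mathbb{R}$ be their compositum. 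Base-changing, set $\mathbf{G}_i = \mathbf{G}_{\Gamma_i} \times_{F_i} k$, a $k$-group with $\mathbf{G}_i(\mathbb{R}) \cong G$ via $\rho_i$.

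First I would verify the Zariski-density hypothesis of the Proposition over $k$. By the definition of the ambient group, each $\Gamma_i$ has a finite-index subgroup $\Gamma_i^\circ$ with $\Gamma_i^\circ \subseteq \rho_i(\mathbf{G}_{\Gamma_i}(F_i)) \subseteq \rho_i(\mathbf{G}_i(k))$. Setting $\Lambda = \Gamma_1 \cap \Gamma_2$ and $\Lambda^\circ = \Lambda \cap \Gamma_1^\circ \cap \Gamma_2^\circ$, the group $\Lambda^\circ$ has finite index in the Zariski-dense group $\Lambda$, hence is itself Zariski-dense, and by construction $\Lambda^\circ \subseteq \rho_1(\mathbf{G}_1(k)) \cap \rho_2(\mathbf{G}_2(k))$. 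Proposition~\ref{similartothomson} then yields a $k$-isogeny $f : \mathbf{G}_1 \to \mathbf{G}_2$.

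The heart of the argument is to upgrade this to the claim $F_1 = F_2$. For any embedding $\sigma : k \to \mathbb{R}$ (all of which are real, as $k$ is totally real), transitivity of base change gives $\mathbf{G}_i^\sigma(\mathbb{R}) \cong \mathbf{G}_{\Gamma_i}^{\sigma|_{F_i}}(\mathbb{R})$, which by admissibility of $\mathbf{G}_{\Gamma_i}$ is noncompact when $\sigma|_{F_i} = \mathrm{id}$ and compact otherwise. Since $f$ base-changes under $\sigma$ to an $\mathbb{R}$-isogeny $\mathbf{G}_1^\sigma(\mathbb{R}) \to \mathbf{G}_2^\sigma(\mathbb{R})$, and isogenous real semisimple groups are simultaneously compact or noncompact, we obtain $\sigma|_{F_1} = \mathrm{id} \iff \sigma|_{F_2} = \mathrm{id}$ for every $\sigma$. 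Now for $x \in F_1$, any $\sigma$ fixing $F_2$ pointwise also fixes $F_1$, hence fixes $x$; as the $F_2$-embeddings of $k$ into $\mathbb{R}$ separate $k$ over $F_2$, this forces $x \in F_2$. Thus $F_1 \subseteq F_2$, and by symmetry $F_1 = F_2$.

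With $k = F_1 = F_2$ now equal to the common trace field, the isogeny $f$ produced above is precisely a $k(\Gamma_1)$-isogeny between $\mathbf{G}_{\Gamma_1}$ and $\mathbf{G}_{\Gamma_2}$, so the ambient groups coincide. I expect the main obstacle to be the last two steps: arranging a Zariski-dense intersection inside the $k$-points (where one must pass to finite-index subgroups and invoke virtual containment from Vinberg's theorem) and, above all, the passage from a $k$-isogeny over the compositum to equality of the trace fields, which is exactly the point at which admissibility---rather than mere quasi-arithmeticity of the field---is essential.
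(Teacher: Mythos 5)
Your proof is correct, but it reverses the order of operations relative to the paper's argument, and the point where admissibility enters is genuinely different. The paper first proves directly that the adjoint trace fields of $\Gamma_1$, $\Gamma_2$, and $\Lambda=\Gamma_1\cap\Gamma_2$ all coincide: if the trace field $\ell$ of $\Lambda$ were a proper subfield of $k=k(\Gamma_1)$, a non-identity embedding $\sigma$ of $k$ fixing $\ell$ would give $\ad(\Lambda)=\sigma(\ad(\Lambda))\subset \ad(\G_1(k\otimes_\sigma\RR))$, a compact group, contradicting Zariski-density of $\Lambda$; only then is Proposition~\ref{similartothomson} invoked, over the now-common field. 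You instead base-change both ambient groups to the compositum $F_1F_2$, apply Proposition~\ref{similartothomson} there to get an isogeny first, and then recover $F_1=F_2$ by matching the compactness of the Galois conjugates $\G_i^\sigma(\RR)$, which admissibility ties to whether $\sigma$ fixes $F_i$. Both arguments ultimately rest on the same fact---admissible groups are compact at non-identity embeddings---but the paper's version is a bit leaner (one application of the compactness observation, applied to $\ad(\Lambda)$ itself, with no base-change bookkeeping), while yours never needs to consider the trace field of the intersection $\Lambda$ and makes explicit that Proposition~\ref{similartothomson} can be run over the compositum before the fields are known to agree. The minor points you flag (passing to finite-index subgroups to land in the $k$-points, Zariski-density of that finite-index subgroup) are glossed at the same level in the paper and are not a gap.
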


\begin{proof}
  To prove Corollary \ref{qa_hybrid} we need only  prove that the adjoint trace fields of $\Gamma_1$ and $\Gamma_2$ are equal to some fixed field $k$; indeed, the ambient groups ${\bf G}_i$ of the $\Gamma_i$ would then be $k$-groups satisfying the assumptions of Proposition \ref{similartothomson}. To that end, let $\Lambda = \Gamma_1 \cap \Gamma_2$ and $\ell$ be the adjoint trace field of $\Lambda$; we prove that the adjoint trace field $k$ of $\Gamma_1$ is equal to $\ell$. 

  We have $\ell \subset k$. If $\ell$ is a proper subfield of $k$ then
  by the Galois correspondence (applied to some Galois number field containing $k$) there is an embedding $\sigma \not= \id$ of $k$ into $\RR$ such that $\sigma|_\ell = \id$. It follows that 
  $\ad(\Lambda) = \sigma(\ad(\Lambda)) \subset \sigma(\ad(\G_1(k))$, and the latter is contained in the compact group $\ad(\G_1(k \otimes_\sigma \RR))$; we recall that here $\G_1$ denotes the ambient group of $\Gamma_1$. We conclude that $\mathrm{Ad}(\Lambda)$ is precompact in $\mathrm{Ad}(G)$, but this contradicts Zariski-density of $\Lambda$ in $G$. 
\end{proof}

%%%%%%%%%%%%%%%%%%%%%%%%%%%%%%

\section{Makarov's polyhedra} \label{desc_polytopes}

\subsection{Vinberg invariants for hyperbolic reflection groups}

Let $P$ be a Coxeter polyhedron in $\HH^d$ and let $H_1, \ldots, H_N$ be the hyperplanes in $\HH^d$ supporting the facets, i.e., codimension-1 faces, of $P$. In the hyperboloid model for $\HH^d$ each $H_i$ is associated with a subspace of signature $(d-1, 1)$ in the Lorentz space $\RR^{d, 1}$; the orthogonal to this subspace is a positive line, so we can choose a unit vector $e_i$ orthogonal to this hyperplane.\footnote{The sign of $e_i$ is not important for us.}

The Gram matrix $G(P)$ for $P$ is then the Gram matrix of the vectors $e_i$, that is,
\[
G(P) = \left( (e_i, e_j)_{\RR^{d, 1}} \right)_{1 \le i, j \le N}. 
\]

Let $k(P)$ be the field generated by all cyclic products of the matrix $G(P)$ (a cyclic product of a matrix $(a_{ij})_{1\le i, j \le N}$ is a product $a_{i_1i_2}a_{i_2i_3}\cdots a_{i_ki_1}$). In general $k(P)$ is a (possibly proper) subfield of the field generated by the entries of $G(P)$.

If $P$ is a Coxeter polyhedron in $\HH^d$ we denote by $\Gamma_P$ the associated reflection group. The following statement follows from work of Vinberg, see 
\cite[Section 4, Theorem 5]{Vinberg_trace}.

\begin{theo}[Vinberg] \label{V} 
Let $P$ be a Coxeter polyhedron in $\HH^d$ and suppose $\Gamma_P$ is Zariski-dense in $\PO(d,1)$. Then the adjoint trace field of $\Gamma_P$ is $k(P)$. 
\end{theo}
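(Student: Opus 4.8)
The plan is to show that the adjoint trace field $k(\Gamma_P)$, generated by the traces $\tr(\ad g)$ for $g \in \Gamma_P$, coincides with the field $k(P)$ generated by the cyclic products of the Gram matrix $G(P)$. Since $\Gamma_P$ is generated by the reflections $r_1, \ldots, r_N$ in the hyperplanes $H_i$, and since $k(\Gamma_P)$ is by definition a subfield of $\RR$ closed under the relevant field operations, I would first reduce the computation to understanding $\tr(\ad w)$ for products $w = r_{i_1} \cdots r_{i_m}$ of these generators, then identify which matrix entries of $G(P)$ actually intervene.

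First I would record explicit formulas. In the Lorentz space $\RR^{d,1}$ the reflection $r_i$ in $H_i$ acts on a vector $x$ by $r_i(x) = x - 2(x, e_i)\, e_i$, where $e_i$ is the chosen unit normal. The key observation is that the matrix of $r_i$ (and hence of any product of the $r_i$) in the standard basis, or better in the basis adapted to the $e_i$, has entries that are polynomial expressions in the inner products $(e_i, e_j) = g_{ij}$, the entries of $G(P)$. The crucial point, which is the content of Vinberg's theorem \cite{Vinberg_trace}, is that when one computes $\tr(\ad w)$ for a word $w$ in the reflections, the normals $e_i$ enter in \emph{paired} fashion: each $e_i$ that appears must be contracted against some $e_j$ via an inner product, so that only the \emph{cyclic products} $g_{i_1 i_2} g_{i_2 i_3} \cdots g_{i_k i_1}$ survive, and the individual entries $g_{ij}$ (which may generate a strictly larger field) do not. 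I would make this precise by expanding $\ad w = \Lambda^2(\mathrm{Ad}$ on $\RR^{d,1})$ acting on $\mathfrak{so}(d,1) \cong \Lambda^2 \RR^{d,1}$, where the trace is visibly a sum of products of inner products $(r_{i_1}\cdots r_{i_m} v, w)$ for basis vectors $v, w$; tracking the dependence on the $e_i$ shows each such term is a $\ZZ$-polynomial in the cyclic products of $G(P)$. This gives the inclusion $k(\Gamma_P) \subseteq k(P)$.

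For the reverse inclusion $k(P) \subseteq k(\Gamma_P)$, I would show that every cyclic product $g_{i_1 i_2} \cdots g_{i_k i_1}$ is itself a trace, or a polynomial in traces, of suitable words in the reflections. The natural candidates are the elements $r_{i_1} r_{i_2} \cdots r_{i_k}$ and their subwords: expanding $\tr(\ad(r_{i_1}\cdots r_{i_k}))$ and comparing against the traces of shorter words, one can solve recursively for the cyclic product $g_{i_1 i_2}\cdots g_{i_k i_1}$ in terms of lower-order trace data, using that $g_{ii} = 1$ and that products $r_i r_j$ already expose $g_{ij}^2$. Here is where Zariski-density of $\Gamma_P$ in $\PO(d,1)$ is essential: it guarantees that the representation is irreducible enough that no degeneracy collapses these trace relations, and in particular that the $e_i$ span $\RR^{d,1}$, so that all entries of $G(P)$ are genuinely recoverable from the group.

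The main obstacle I expect is the reverse inclusion, specifically the bookkeeping needed to extract \emph{exactly} the cyclic products---and no more than the cyclic products---from the traces. The forward inclusion is a relatively mechanical expansion, but isolating the cyclic products requires care: one must verify that the recursive extraction never forces a non-cyclic combination of entries, and one must handle the role of the signature $(d,1)$ and the positivity of the lines orthogonal to the $H_i$ correctly (this is where the hyperbolic, as opposed to spherical or Euclidean, nature enters). Rather than reprove all of this by hand, I would invoke \cite[Section 4, Theorem 5]{Vinberg_trace} directly, as stated: Vinberg establishes precisely that for a Zariski-dense reflection group the adjoint trace field is generated by the cyclic products of the Gram matrix. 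The proof then amounts to checking that the hypotheses of Vinberg's theorem are met---namely that $\Gamma_P$ is a reflection group with Gram matrix $G(P)$ and is Zariski-dense in $\PO(d,1)$---and quoting the conclusion, so that $k(\Gamma_P) = k(P)$ as desired.
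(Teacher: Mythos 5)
Your proposal is correct and takes essentially the same route as the paper: the paper offers no independent argument for this statement, deducing it directly from \cite[Section 4, Theorem 5]{Vinberg_trace}, which is exactly the citation you ultimately invoke after your (reasonable, if heuristic) sketch of why only the cyclic products of $G(P)$ survive in the traces. Nothing more is required here.
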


%%%%%%%%%%%%%%%

The following criterion is useful for determining when two orthogonal groups are isomorphic (see \cite[2.6]{GPS}), and we will use it to check the conditions in Proposition~\ref{commclass_garlands}. 

\begin{lem} \label{isogeny}
  Let $k$ be a field of characteristic $0$, $m \ge 2$, and $Q, Q'$ be two non-degenerate quadratic forms on $k^m$. Then $\PO(Q)$ is $k$-isogenous to $\PO(Q')$ if and only if $Q, Q'$ are similar over $k$, that is, if and only if there is some $\lambda \in k^\times$ such that the forms $Q$ and $\lambda Q'$ are isometric over $k$. 
\end{lem}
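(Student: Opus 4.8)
The plan is to prove both directions of the equivalence, with the main content lying in the ``only if'' direction, since the ``if'' direction is essentially a change of variables. For the easy direction, suppose $Q$ and $\lambda Q'$ are isometric over $k$, say via some $g \in \GL_m(k)$ with $g^T M_{\lambda Q'} g = M_Q$, where $M_Q$ denotes the Gram matrix of $Q$. Since $\PO(\lambda Q') = \PO(Q')$ as algebraic groups (scaling a form by a unit does not change its orthogonal group), conjugation by $g$ furnishes a $k$-isomorphism $\PO(Q') \to \PO(Q)$, hence a $k$-isogeny.

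The substance is the converse. Here I would first pass to the special orthogonal groups or work directly with the adjoint groups, keeping in mind that a $k$-isogeny of the $\PO$'s induces a $k$-isomorphism of Lie algebras $\LSL$-style, i.e.\ of $\mathfrak{so}(Q) \cong \mathfrak{so}(Q')$ over $k$. The classical fact I would invoke is that for $m \ge 2$ (with appropriate care in the small exceptional cases) the Lie algebra $\mathfrak{so}(Q)$, together with its standard $m$-dimensional representation recovered from the structure of the algebraic group $\PO(Q)$, determines the quadratic form $Q$ up to similarity over $k$. The cleanest route is to use that the orthogonal group, as an algebraic group over $k$, comes equipped with its natural representation on $k^m$ (for instance via the Clifford algebra or via the spin representation in low rank), and that two such natural representations of $k$-isogenous groups must be related by an isomorphism carrying one invariant form to a scalar multiple of the other. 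I would cite the standard references for this; in the exposition of Gromov--Piatetski-Shapiro this is exactly \cite[2.6]{GPS}, which treats precisely the orthogonal groups relevant to $\PO(d,1)$, so in practice I would reduce to quoting that statement.

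The delicate points, and what I expect to be the main obstacle, are the low-rank coincidences where $\PO(Q)$ is not simply the image of $\SO(Q)$ with the expected automorphism structure: for $m = 2$ the group is one-dimensional (a torus) and the ``natural representation'' argument degenerates, while for $m = 3, 4, 6$ there are exceptional isogenies ($\mathfrak{so}_3 \cong \LSL_2$, $\mathfrak{so}_4 \cong \LSL_2 \times \LSL_2$, $\mathfrak{so}_6 \cong \LSL_4$) that can a priori produce $k$-isogenies not arising from similarities of the forms. I would therefore either restrict attention to the ranks actually needed (here $m = d+1 \in \{5,6\}$, which sit safely above these coincidences and correspond to absolutely simple groups of type $B_2, D_3$ handled uniformly by the Witt-theoretic classification of forms), or else handle the small cases separately by direct arguments on tori and products. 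Since the paper only ever applies this lemma to forms of signature $(d,1)$ with $d \in \{4,5\}$, i.e.\ $m \in \{5,6\}$, the cleanest writeup would state the general principle, cite \cite[2.6]{GPS} for the orthogonal-group case, and note that the ranks we use avoid the exceptional phenomena.
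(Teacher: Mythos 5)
Your proposal is essentially the paper's own treatment: the paper states this lemma without proof, simply citing \cite[2.6]{GPS}, and your argument likewise reduces the substantive direction to quoting that reference (the easy direction and the remark that the applications only involve $m=5,6$, safely above the low-rank coincidences, are sensible additions but not a departure). No gap to report.
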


%%%%%%%%%%%%%%%

\subsection{In four dimensions} \label{4d_Makarov}

\subsubsection{Description}
We follow Vinberg \cite[p.62]{Vinberg_polytopes}. Consider the abstract Coxeter simplex given by the following diagram:
\begin{center}
\begin{tikzpicture}
  \coordinate (1) at (0, 0) ;
  \coordinate (2) at (1, 0) ;
  \coordinate (3) at (2, 0) ;
  \coordinate (4) at (3, 0) ;
  \coordinate (5) at (4, 0) ;

  \draw (0,0.06) -- (1,0.06) ; 
  \draw (0,-0.06) -- (1,-0.06) ;
  \draw (2) -- (3) ; 
  
  \draw (3) -- (4) ; 
  \draw (2,0.07) -- (3,0.07) ; 
  \draw (2,-0.07) -- (3,-0.07) ;
  
  \draw (4) -- (5) ;

  \fill[white] (1) circle (2pt) ;
  \fill[white] (2) circle (2pt) ; 
  \fill[white] (3) circle (2pt) ;
  \fill[white] (4) circle (2pt) ;
  \fill[white] (5) circle (2pt) ;
  
  \draw (1) circle (2pt) ;
  \draw (2) circle (2pt) ; 
  \draw (3) circle (2pt) ;
  \draw (4) circle (2pt) ;
  \draw (5) circle (2pt) ; 
\end{tikzpicture}
\end{center}
The Gram matrix has signature $(4,1)$ so that this abstract simplex can be realized as a projective simplex $S_1^4 \subset \mathbb P(\RR^{4,1})$. Three of its vertices have spherical link so they lie within $\HH^4$. The remaining two, whose opposite facets correspond to the right- and leftmost vertices in the diagram, have links corresponding to the diagrams
\begin{center}
\begin{tikzpicture}
  \coordinate (1) at (0, 0) ;
  \coordinate (2) at (1, 0) ;
  \coordinate (3) at (2, 0) ;
  \coordinate (4) at (3, 0) ;

  \draw (0,0.06) -- (1,0.06) ; 
  \draw (0,-0.06) -- (1,-0.06) ;
  
  \draw (2) -- (3) ; 
  \draw (3) -- (4) ; 
  \draw (2,0.07) -- (3,0.07) ; 
  \draw (2,-0.07) -- (3,-0.07) ;

  \fill[white] (1) circle (2pt) ;
  \fill[white] (2) circle (2pt) ; 
  \fill[white] (3) circle (2pt) ;
  \fill[white] (4) circle (2pt) ;
  
  \draw (1) circle (2pt) ;
  \draw (2) circle (2pt) ; 
  \draw (3) circle (2pt) ;
  \draw (4) circle (2pt) ;

  \coordinate (2p) at (6, 0) ;
  \coordinate (3p) at (7, 0) ;
  \coordinate (4p) at (8, 0) ;
  \coordinate (5p) at (9, 0) ;

  \draw (2p) -- (3p) ; 
  
  %\draw (3p) -- (4p) ; 
  \draw (7,+0.07) -- (8,+0.07) ;
  \draw (7,0) -- (8,0) ;
  \draw (7,-0.07) -- (8,-0.07) ;
  
  \draw (4p) -- (5p) ; 

  \fill[white] (2p) circle (2pt) ; 
  \fill[white] (3p) circle (2pt) ;
  \fill[white] (4p) circle (2pt) ;
  \fill[white] (5p) circle (2pt) ;
  
  \draw (2p) circle (2pt) ; 
  \draw (3p) circle (2pt) ;
  \draw (4p) circle (2pt) ;
  \draw (5p) circle (2pt) ; 
\end{tikzpicture}
\end{center}
which represent compact simplices $T_1, T_2$ in $\HH^3$, respectively; it follows that these vertices are hyperideal and that there are two hyperplanes $H_1, H_2$ truncating $S_1^4$ into a compact polyhedron $\overline{S_1^4} \subset \HH^4$, two facets of which are orthogonal to all facets meeting them and are isometric respectively to $T_1$ and $T_2$. We let $P_1^4$ be the double of $\overline{S_1^4}$ along the $T_1$ facet. 

\medskip

The polyhedron $P_2^4$ is constructed in a similar manner: consider the Coxeter diagram
\begin{center}
\begin{tikzpicture}
  \coordinate (1) at (0, 0) ;
  \coordinate (2) at (1, 0) ;
  \coordinate (3) at (2, 0) ;
  \coordinate (4) at (3, 0) ;
  \coordinate (5) at (4, 0) ;

  \draw (1) -- (2) ;
  \draw (0,0.07) -- (1,0.07) ; 
  \draw (0,-0.07) -- (1,-0.07) ;
  \draw (2) -- (3) ; 
  \draw (3) -- (4) ; 
  \draw (2,0.07) -- (3,0.07) ; 
  \draw (2,-0.07) -- (3,-0.07) ;
  \draw (4) -- (5) ;

  \fill[white] (1) circle (2pt) ;
  \fill[white] (2) circle (2pt) ; 
  \fill[white] (3) circle (2pt) ;
  \fill[white] (4) circle (2pt) ;
  \fill[white] (5) circle (2pt) ;
  
  \draw (1) circle (2pt) ;
  \draw (2) circle (2pt) ; 
  \draw (3) circle (2pt) ;
  \draw (4) circle (2pt) ;
  \draw (5) circle (2pt) ; 
\end{tikzpicture}
\end{center}
which likewise represents a simplex $S_2^4$ in $\mathbb P(\RR^{4,1})$. As before, three of the vertices have spherical link, and the remaining two have links with diagrams
\begin{center}
\begin{tikzpicture}
  \coordinate (1) at (0, 0) ;
  \coordinate (2) at (1, 0) ;
  \coordinate (3) at (2, 0) ;
  \coordinate (4) at (3, 0) ;

  \draw (1) -- (2) ;
  \draw (0,0.07) -- (1,0.07) ; 
  \draw (0,-0.07) -- (1,-0.07) ;
  \draw (2) -- (3) ; 
  \draw (3) -- (4) ; 
  \draw (2,0.07) -- (3,0.07) ; 
  \draw (2,-0.07) -- (3,-0.07) ;

  \fill[white] (1) circle (2pt) ;
  \fill[white] (2) circle (2pt) ; 
  \fill[white] (3) circle (2pt) ;
  \fill[white] (4) circle (2pt) ;
  
  \draw (1) circle (2pt) ;
  \draw (2) circle (2pt) ; 
  \draw (3) circle (2pt) ;
  \draw (4) circle (2pt) ;

  \coordinate (2p) at (6, 0) ;
  \coordinate (3p) at (7, 0) ;
  \coordinate (4p) at (8, 0) ;
  \coordinate (5p) at (9, 0) ;

  \draw (2p) -- (3p) ; 
  \draw (7,+0.07) -- (8,+0.07) ;
  \draw (7,0) -- (8,0) ;
  \draw (7,-0.07) -- (8,-0.07) ;
  
  \draw (4p) -- (5p) ; 

  \fill[white] (2p) circle (2pt) ; 
  \fill[white] (3p) circle (2pt) ;
  \fill[white] (4p) circle (2pt) ;
  \fill[white] (5p) circle (2pt) ;
  
  \draw (2p) circle (2pt) ; 
  \draw (3p) circle (2pt) ;
  \draw (4p) circle (2pt) ;
  \draw (5p) circle (2pt) ; 
\end{tikzpicture}
\end{center}
representing compact simplices $T_3, T_2$ in $\HH^3$, respectively. Using the same cutting/doubling (this time along $T_3$), we obtain $P_2^4$.

\medskip

To summarize, we have described compact polyhedra $P_1^4, P_2^4$ in $\HH^4$ such that each has two nonadjacent facets isometric to the hyperbolic 3-simplex $T_2$ and orthogonal to all adjacent facets. The Coxeter diagrams of $P_1^4$ and $P_2^4$ are depicted in Figure~\ref{fig:p12-4}.

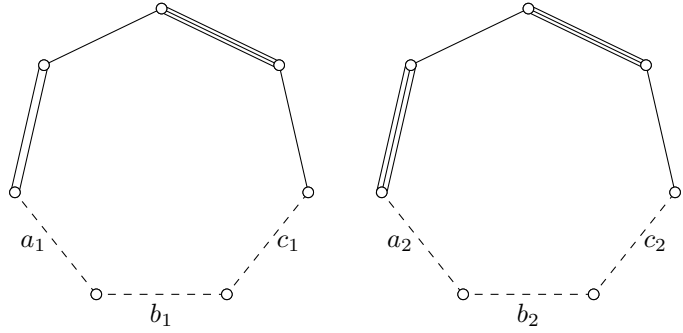
\begin{figure}
    \centering
    \begin{tikzpicture}
 \begin{scope}[rotate=90, yscale=-1] % Rotate by 90 degrees and reflect
    \foreach \i in {1,...,7} {
      \draw[fill=white] ({360/7*(\i - 3)}:2) circle (2pt);% node [above] {$\i$};
      \coordinate (p\i) at ({360/7*(\i - 3)}:2);
    }

     %Calculate points for parallel edges
    \path let \p1 = ($(p2)-(p1)$), \n1 = {atan2(\y1,\x1)} in
      coordinate (parallelStart1) at ($(p1) + (90-\n1:0.06cm)$)
      coordinate (parallelEnd1) at ($(p2) + (90-\n1:0.06cm)$)
      coordinate (parallelStart2) at ($(p1) - (90-\n1:0.06cm)$)
      coordinate (parallelEnd2) at ($(p2) - (90-\n1:0.06cm)$);

    % Calculate points for parallel edges
    \path let \p1 = ($(p4)-(p3)$), \n1 = {atan2(\y1,\x1)} in
      coordinate (q1) at ($(p3) + (90-\n1:0.075cm)$)
      coordinate (q2) at ($(p4) + (90-\n1:0.075cm)$)
      coordinate (s1) at ($(p3) - (90-\n1:0.075cm)$)
      coordinate (s2) at ($(p4) - (90-\n1:0.075cm)$);

    % Draw two parallel straight edges between p1 and p2

    \draw (parallelStart1) -- (parallelEnd1);
    \draw (parallelStart2) -- (parallelEnd2);

    \draw (p2) -- (p3);

    % Draw three parallel straight edges between p3 and p4
    \draw (q1) --  (q2);
    \draw (p3) --  (p4);
    \draw (s1) --  (s2);    
    
    \draw (p4) -- (p5);
    \draw[dashed] (p5) -- node [right] {$c_1$}  (p6);
    \draw[dashed] (p6) -- node [below] {$b_1$}  (p7);
    \draw[dashed] (p7) -- node [left] {$a_1$}  (p1);

    \foreach \i in {1,...,7} {\draw[fill=white] ({360/7*(\i - 3)}:2) circle (2pt);}
  \end{scope}
\end{tikzpicture} \quad \quad
\begin{tikzpicture}
 \begin{scope}[rotate=90, yscale=-1] % Rotate by 90 degrees and reflect
    \foreach \i in {1,...,7} {
      \draw[fill=white] ({360/7*(\i - 3)}:2) circle (2pt);
      \coordinate (p\i) at ({360/7*(\i - 3)}:2);
    }

     %Calculate points for parallel edges
    \path let \p1 = ($(p2)-(p1)$), \n1 = {atan2(\y1,\x1)} in
      coordinate (parallelStart1) at ($(p1) + (90-\n1:0.075cm)$)
      coordinate (parallelEnd1) at ($(p2) + (90-\n1:0.075cm)$)
      coordinate (parallelStart2) at ($(p1) - (90-\n1:0.075cm)$)
      coordinate (parallelEnd2) at ($(p2) - (90-\n1:0.075cm)$);

    % Calculate points for parallel edges
    \path let \p1 = ($(p4)-(p3)$), \n1 = {atan2(\y1,\x1)} in
      coordinate (q1) at ($(p3) + (90-\n1:0.0745cm)$)
      coordinate (q2) at ($(p4) + (90-\n1:0.0745cm)$)
      coordinate (s1) at ($(p3) - (90-\n1:0.074cm)$)
      coordinate (s2) at ($(p4) - (90-\n1:0.074cm)$);

    % Draw three parallel straight edges between p1 and p2

    \draw (parallelStart1) -- (parallelEnd1);
    \draw (parallelStart2) -- (parallelEnd2);
    \draw (p1) --  (p2);
    
    \draw (p2) -- (p3);

    % Draw three parallel straight edges between p3 and p4
    \draw (q1) --  (q2);
    \draw (p3) --  (p4);
    \draw (s1) --  (s2);

    \draw (p4) -- (p5);
    \draw[dashed]  (p5) -- node [right] {$c_2$} (p6);
    \draw[dashed] (p6) -- node [below] {$b_2$} (p7);
    \draw[dashed] (p7) -- node [left] {$a_2$} (p1);

    \foreach \i in {1,...,7} {
      \draw[fill=white] ({360/7*(\i - 3)}:2) circle (2pt);}
  \end{scope}
\end{tikzpicture}
    \caption{The Coxeter diagrams of the Coxeter polyhedra $P_1^4$ (left) and $P_2^4$ (right). Here, $a_1 = \sqrt{\frac{\sqrt{5}}{2\sqrt{5}-3}}$, $b_1 = \frac{\sqrt{3+\sqrt{5}}}{\sqrt{13-5\sqrt{5}}}$, $c_1 = \sqrt{\frac{\sqrt{5}}{\sqrt{5}-1}}$, $a_2 = \frac{\sqrt{2}\sqrt{\sqrt{5}+7}}{\sqrt{11}}$, $b_2 = \frac{3+\sqrt{5}}{\sqrt{2(71-29\sqrt{5})}}$, and $c_2 = \frac{2\sqrt{\sqrt{5}+9}}{\sqrt{38}}$. 
    These values can be found as follows. The condition that the Gram matrix of $P_i^4$ has signature $(4,1)$ implies that the principal corner minors of order $6$ and the determinant of the Gram matrix itself are equal to $0$. Therefore we obtain a system of equations with respect to $a_i$, $b_i$, $c_i$. Our computations coincide with those presented in \cite[Example~3.16]{dotti}. One can also check that with these values $a_i$, $b_i$, $c_i$, the Gram matrix of $P_i^4$ indeed has signature $(4,1)$, and thus by \cite[Theorem 2.1]{Vinberg_polytopes} the acute-angled polyhedron $P_i^4$ indeed exists and is uniquely determined up to an isometry of $\mathbb{H}^4$.
    }
    \label{fig:p12-4}
\end{figure}

%%%%%%%%%%%%%%%

\subsubsection{Arithmetic invariants}

Computing Gram matrices, we see that $S_1^4$ and $P_1^4$ (and $\overline{S_1^4}$) all share the same Vinberg field $k = \QQ(\sqrt 5)$. Since $\Gamma_{S_1^4}$ is Zariski-dense in $\PO(4,1)$, the ambient group of $\Gamma_{S_1^4}$ is the same as that of $\Gamma_{P_1^4}$.

The Gram matrix of $S_1^4$ is given by 
\[
G(S_1^4) = \left(\begin{array}{rrrrr}
1 & -\sqrt 2/2 & 0 & 0 & 0 \\
-\sqrt 2/2 & 1 & -\frac{1}{2} & 0 & 0 \\
0 & -\frac{1}{2} & 1 & -\frac{1}{2} a & 0 \\
0 & 0 & -\frac{1}{2} a & 1 & -\frac{1}{2} \\
0 & 0 & 0 & -\frac{1}{2} & 1
\end{array}\right), 
\]
where $a = 2\cos(\pi/5) = \tfrac{\sqrt 5 + 1}2$. The subgroup of $\mathrm{GL}_5(\mathbb{R})$ generated by the reflections $\gamma_i : v \mapsto v - 2 (v^T G(S_1^4) e_i)e_i$ for $i=1, \ldots, 5$ preserves $G(S_1^4)$, where the latter is viewed as a quadratic form on $\mathbb{R}^5$, and the image of this group in $\mathrm{PGL}_5(\mathbb{R})$ is conjugate to $\Gamma_{S_1^4}$. In the new basis for $\mathbb{R}^5$ obtained by multiplying $e_1$ by $\sqrt 2$, the matrix for the form $G(S_1^4)$ is given by 
\[
Q_1^4 = \left(\begin{array}{rrrrr}
2 & -1 & 0 & 0 & 0 \\
-1 & 1 & -\frac{1}{2} & 0 & 0 \\
0 & -\frac{1}{2} & 1 & -\frac{1}{2} a & 0 \\
0 & 0 & -\frac{1}{2} a & 1 & -\frac{1}{2} \\
0 & 0 & 0 & -\frac{1}{2} & 1
\end{array}\right)
\]
and the entries of the $\gamma_i$ lie in $k$, so that the ambient group of $\Gamma_{S_1^4}$ is isomorphic to $\PO(Q_1^4)$.  The latter group is admissible for $\PO(4, 1)$ and so $\Gamma_{P_1^4}$ is quasi-arithmetic (in fact, we have checked that $\Gamma_{P_1^4}$ is properly quasi-arithmetic; this was indeed already observed by Dotti \cite[Example~3.16]{dotti}). 

\medskip

Similarly, the Vinberg fields of $S_2^4$ and $P_2^4$ are both equal to $k$, and the ambient group of $\Gamma_{P_2^4}$ is the same as that of $\Gamma_{S_2^4}$. The Gram matrix of $S_2^4$ is
\[
Q_2^4 = \left(\begin{array}{rrrrr}
1 & -\frac{1}{2} a & 0 & 0 & 0 \\
-\frac{1}{2} a & 1 & -\frac{1}{2} & 0 & 0 \\
0 & -\frac{1}{2} & 1 & -\frac{1}{2} a & 0 \\
0 & 0 & -\frac{1}{2} a & 1 & -\frac{1}{2} \\
0 & 0 & 0 & -\frac{1}{2} & 1
\end{array}\right),
\]
so the ambient group for $\Gamma_{S_2^4}$ is $\PO(Q_2^4)$. It follows that $\Gamma_{P_2^4}$ is  quasi-arithmetic (and again, one can check, properly quasi-arithmetic). 

\medskip

Now we prove that the ambient groups $\PO(Q_1^4)$ and $\PO(Q_2^4)$ are not $k$-isogenous using Lemma \ref{isogeny}. This was already established by Dotti \cite[Example~3.16]{dotti}, but we include the argument here for the convenience of the reader. A straightforward computation in SAGE \cite{sagemath} shows that the Hasse invariants of $Q_1^4$ and $Q_2^4$ differ at the prime $\mathfrak p_5 = (1-2a)$, i.e., the ideal generated by a square root of 5 in $k$. Recall that the Hasse invariant of a quadratic form $Q$ over a field $K$ is defined to be 
\begin{equation} \label{defn_hasse}
h_K(Q) = \prod_{i < j} (a_i, a_j)_K
\end{equation}
where $Q$ is isometric to a diagonal form with coefficients $a_i$, and $(x, y)_K$ is the Hilbert symbol over $K$ (which equals $1$ if the corresponding quaternion algebra is $K$-split and $-1$ otherwise, see e.g. \cite{McR} for the definition). 

Assume that $Q_1^4$ is isometric over $k$ to $\lambda Q_2^4$ for some $\lambda\in k^\times$. Since $\tfrac{\det(Q_1^4)}{\det(Q_2^4)} = \frac{6-2a}5$ and the factorisation of $(6-2a)$ into prime ideals is $(2) \cdot \mathfrak p_5$, we see that $\lambda = 2u\mathfrak p_5$ modulo squares, where $u$ is a unit in the integer ring $\ZZ[a]$.  

Let $k_5$ be the localisation of $k$ at $\mathfrak p_5$. By Theorem 2.6.6(3) in \cite{McR}, we have that $(\lambda, \lambda)_{k_5} = 1$ if and only if $-1$ is a square modulo $\mathfrak p_5$. This is indeed the case since $\ZZ[a]/\mathfrak p_5 = \mathbb F_5$. So the Hasse invariant of $\lambda Q_2^4$ over $k_5$ is equal to that of $Q_2^4$ and hence distinct from that of $Q_1^4$, so that the forms $Q_1^4$ and $Q_2^4$ cannot be isometric over $k_5$, a fortiori not over $k$.

%%%%%%%%%%%%%%%

\subsection{In five dimensions} \label{5d_Makarov}

\subsubsection{Description} 
Each of the Coxeter diagrams
\begin{center}
\begin{tikzpicture}
  \coordinate (1) at (0, 0) ;
  \coordinate (2) at (1, 0) ;
  \coordinate (3) at (2, 0) ;
  \coordinate (4) at (3, 0) ;
  \coordinate (5) at (4, 0) ;
  \coordinate (6) at (5, 0) ;

  \draw (1) -- (2) ;
  \draw (0,0.07) -- (1,0.07) ; 
  \draw (0,-0.07) -- (1,-0.07) ;
  \draw (2) -- (3) ; 
  \draw (3) -- (4) ; 
  \draw (4) -- (5) ; 
  \draw (5) -- (6) ;

  \fill[white] (1) circle (2pt) ;
  \fill[white] (2) circle (2pt) ; 
  \fill[white] (3) circle (2pt) ;
  \fill[white] (4) circle (2pt) ;
  \fill[white] (5) circle (2pt) ;
  \fill[white] (6) circle (2pt) ;
  
  \draw (1) circle (2pt) ;
  \draw (2) circle (2pt) ; 
  \draw (3) circle (2pt) ;
  \draw (4) circle (2pt) ;
  \draw (5) circle (2pt) ; 
  \draw (6) circle (2pt) ; 
\end{tikzpicture} 
\end{center}

\begin{center}
\begin{tikzpicture}
  \coordinate (1) at (0, 0) ;
  \coordinate (2) at (1, 0) ;
  \coordinate (3) at (2, 0) ;
  \coordinate (4) at (3, 0) ;
  \coordinate (5) at (4, 0) ;
  \coordinate (6) at (5, 0) ;

  \draw (1) -- (2) ;
  \draw (0,0.07) -- (1,0.07) ; 
  \draw (0,-0.07) -- (1,-0.07) ;
  \draw (2) -- (3) ; 
  \draw (3) -- (4) ; 
  \draw (4) -- (5) ; 

  \draw (4,0.06) -- (5,0.06) ; 
  \draw (4,-0.06) -- (5,-0.06) ;

  \fill[white] (1) circle (2pt) ;
  \fill[white] (2) circle (2pt) ; 
  \fill[white] (3) circle (2pt) ;
  \fill[white] (4) circle (2pt) ;
  \fill[white] (5) circle (2pt) ;
  \fill[white] (6) circle (2pt) ;
  
  \draw (1) circle (2pt) ;
  \draw (2) circle (2pt) ; 
  \draw (3) circle (2pt) ;
  \draw (4) circle (2pt) ;
  \draw (5) circle (2pt) ; 
  \draw (6) circle (2pt) ; 
\end{tikzpicture}
\end{center}

\noindent gives a simplex in $\mathbb{P}(\RR^{5,1})$ with a single hyperideal vertex with link corresponding to the diagram
\begin{center}
\begin{tikzpicture}
  \coordinate (1) at (0, 0) ;
  \coordinate (2) at (1, 0) ;
  \coordinate (3) at (2, 0) ;
  \coordinate (4) at (3, 0) ;
  \coordinate (5) at (4, 0) ;

  \draw (1) -- (2) ;
  \draw (0,0.07) -- (1,0.07) ; 
  \draw (0,-0.07) -- (1,-0.07) ;
  \draw (2) -- (3) ; 
  \draw (3) -- (4) ; 
  \draw (4) -- (5) ;

  \fill[white] (1) circle (2pt) ;
  \fill[white] (2) circle (2pt) ; 
  \fill[white] (3) circle (2pt) ;
  \fill[white] (4) circle (2pt) ;
  \fill[white] (5) circle (2pt) ;
  
  \draw (1) circle (2pt) ;
  \draw (2) circle (2pt) ; 
  \draw (3) circle (2pt) ;
  \draw (4) circle (2pt) ;
  \draw (5) circle (2pt) ; 
\end{tikzpicture} 
\end{center}

\noindent which represents a compact simplex in $\mathbb{H}^4$. Truncating these vertices yields compact Coxeter polyhedra $P_1^5, P \subset \mathbb{H}^5$ (see Figure~\ref{p12-5} for their Coxeter diagrams), respectively, each possessing a facet orthogonal to all adjacent facets and which is isometric to the polyhedron with the latter Coxeter diagram.
Moreover, there is a nonadjacent facet $F$ of $P$ forming even angles (of $\frac{\pi}{2}$ or $\frac{\pi}{4}$) with all remaining facets. Let $P_2^5$ be the double of $P$ along $F$. 

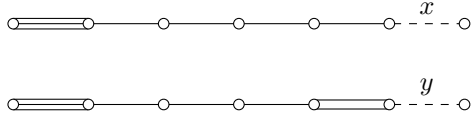
\begin{figure}
\begin{center}
\begin{tikzpicture}
  \coordinate (1) at (0, 0) ;
  \coordinate (2) at (1, 0) ;
  \coordinate (3) at (2, 0) ;
  \coordinate (4) at (3, 0) ;
  \coordinate (5) at (4, 0) ;
  \coordinate (6) at (5, 0) ;
  \coordinate (7) at (6, 0) ;

  \draw (1) -- (2) ;
  \draw (0,0.07) -- (1,0.07) ; 
  \draw (0,-0.07) -- (1,-0.07) ;

  \draw (2) -- (3) ; 
  \draw (3) -- (4) ; 
  \draw (4) -- (5) ; 
  \draw (5) -- (6) ; 
  \draw[dashed] (7) --  node [above] {$x$} (6) ; 

  \fill[white] (1) circle (2pt) ;
  \fill[white] (2) circle (2pt) ; 
  \fill[white] (3) circle (2pt) ;
  \fill[white] (4) circle (2pt) ;
  \fill[white] (5) circle (2pt) ;
  \fill[white] (6) circle (2pt) ;
  \fill[white] (7) circle (2pt) ;
  
  \draw (1) circle (2pt) ;
  \draw (2) circle (2pt) ; 
  \draw (3) circle (2pt) ;
  \draw (4) circle (2pt) ;
  \draw (5) circle (2pt) ; 
  \draw (6) circle (2pt) ; 
  \draw (7) circle (2pt) ; 
\end{tikzpicture} 

\vskip 0.5cm
\begin{tikzpicture}
  \coordinate (1) at (0, 0) ;
  \coordinate (2) at (1, 0) ;
  \coordinate (3) at (2, 0) ;
  \coordinate (4) at (3, 0) ;
  \coordinate (5) at (4, 0) ;
  \coordinate (6) at (5, 0) ;
  \coordinate (7) at (6, 0) ;

  \draw (1) -- (2) ;
  \draw (0,0.07) -- (1,0.07) ; 
  \draw (0,-0.07) -- (1,-0.07) ; 
  
  \draw (2) -- (3) ; 
  \draw (3) -- (4) ; 
  \draw (4) -- (5) ; 
  %\draw (5) -- (4.5,0) node [above] {$4$} -- (6) ; 
  \draw (4,0.06) -- (5,0.06) ; 
  \draw (4,-0.06) -- (5,-0.06) ;  
  \draw[dashed] (7) --  node [above] {$y$} (6) ; 

  \fill[white] (1) circle (2pt) ;
  \fill[white] (2) circle (2pt) ; 
  \fill[white] (3) circle (2pt) ;
  \fill[white] (4) circle (2pt) ;
  \fill[white] (5) circle (2pt) ;
  \fill[white] (6) circle (2pt) ;
  \fill[white] (7) circle (2pt) ;
  
  \draw (1) circle (2pt) ;
  \draw (2) circle (2pt) ; 
  \draw (3) circle (2pt) ;
  \draw (4) circle (2pt) ;
  \draw (5) circle (2pt) ; 
  \draw (6) circle (2pt) ; 
  \draw (7) circle (2pt) ; 
\end{tikzpicture}
\caption{The Coxeter diagrams of the Coxeter polyhedra $P_1^5$ and $P$. Here, $x = \frac{1}{2} \sqrt{\frac{1}{2} (7 + \sqrt{5})}$ and $y = \frac{\sqrt{3 + \sqrt{5}}}{2}$. These values were obtained in precisely the same way as the values in Figure~\ref{fig:p12-4}.}
\label{p12-5}
\end{center}
\end{figure}

\subsubsection{Arithmetic invariants} As in the $4$-dimensional case, the Vinberg field of $S_i^5$ (which coincides with the Vinberg field of $P_i^5$) is $k = \mathbb{Q}(\sqrt{5})$ for $i=1,2$. The Gram matrix of $S_1^5$ is
\[
Q_1^5 = \begin{pmatrix} 
1 & -\frac{1}{2}a & 0 & 0 & 0 & 0 \\
-\frac{1}{2}a & 1 & -\frac{1}{2} & 0 & 0 & 0 \\
0 & -\frac{1}{2} & 1 & -\frac{1}{2} & 0 & 0 \\
0 & 0 & -\frac{1}{2} & 1 & -\frac{1}{2} & 0 \\
0 & 0 & 0 & -\frac{1}{2} & 1 & -\frac{1}{2} \\
0 & 0 & 0 & 0 & -\frac{1}{2} & 1
\end{pmatrix},
\]
so the ambient group for $\Gamma_{S_1^5}$ is $\PO(Q_1^5)$. Since this group is admissible for $\PO(5,1)$, we have that $\Gamma_{P_1^5}$ is quasi-arithmetic. 

The Gram matrix of $S_2^5$ is
\[
 \begin{pmatrix} 
1 & -\frac{1}{2}a & 0 & 0 & 0 & 0 \\
-\frac{1}{2}a & 1 & -\frac{1}{2} & 0 & 0 & 0 \\
0 & -\frac{1}{2} & 1 & -\frac{1}{2} & 0 & 0 \\
0 & 0 & -\frac{1}{2} & 1 & -\frac{1}{2} & 0 \\
0 & 0 & 0 & -\frac{1}{2} & 1 & -\sqrt{2}/2 \\
0 & 0 & 0 & 0 & -\sqrt{2}/2 & 1
\end{pmatrix}.
\]
By multiplying the last basis vector by $\sqrt{2}$ we obtain the equivalent matrix
\[
Q_2^5= \begin{pmatrix} 
1 & -\frac{1}{2}a & 0 & 0 & 0 & 0 \\
-\frac{1}{2}a & 1 & -\frac{1}{2} & 0 & 0 & 0 \\
0 & -\frac{1}{2} & 1 & -\frac{1}{2} & 0 & 0 \\
0 & 0 & -\frac{1}{2} & 1 & -\frac{1}{2} & 0 \\
0 & 0 & 0 & -\frac{1}{2} & 1 & -1 \\
0 & 0 & 0 & 0 & -1 & 2
\end{pmatrix},
\]
so the ambient group of $\Gamma_{S_2}^5$ is $\PO(Q_2^5)$. This group is admissible for $\PO(5,1)$, and so $\Gamma_{P_2^5}$ is also quasi-arithmetic. Though we will not need this, it is easy to verify using Vinberg's arithmeticity criterion \cite[Thm.~2]{Vin67} that the polyhedra $P_1^5$ and $P$ (hence also~$P_2^5$) are moreover arithmetic.

By Lemma \ref{isogeny}, the $k$-groups $\PO(Q_1^5)$ and $\PO(Q_2^5)$ are $k$-isogenous only if $\frac{\det(Q_1^5)}{\det(Q_2^5)} = \frac{8-2a}{4}$ is a square in $k$. The latter is true if and only if $8-2a$ is a square in $k$, hence in its integer ring $\mathbb{Z}[a]$. We have $8-2a = 2(4-a)$, and the norm of $(4-a)$ is 11. So the norm of $(8-2a)$ is $44$, which is not a square in $\ZZ$, so that $8-2a$ is not a square in $\ZZ[a]$.

%%%%%%%%%%%%%%%%%%%%%%%%%%%%%%

\section{Coxeter polyhedra and commensurability} \label{proofs}

\subsection{Garlands} \label{garlands}

Let $P_1, P_2$ be two compact Coxeter polyhedra in $\HH^d$ satisfying the following hypothesis:

\begin{assumption} \label{2side_gluing}
  Each of the $P_i$ contains two nonadjacent facets isometric to the same Coxeter polyhedron $R$ in $\HH^{d-1}$ and orthogonal to all adjacent facets.
\end{assumption}

Given any sequence $\alpha \in \{1, 2\}^n$ we can form a compact polyhedron $P_\alpha$ by gluing copies of $P_1, P_2$ according to $\alpha$ (what Vinberg \cite{Vinberg_polytopes} calls a {\em garland}). Formally, we choose an indexing of the two $R$-facets of $P_i$ which we denote by $\pl^\pm P_i$ respectively, and then we obtain $P_\alpha$ by identifying each $\pl^+ P_{\alpha_i}$ with $\pl^- P_{\alpha_{i+1}}$ for $1\le i \le n-1$. As a consequence of Assumption \ref{2side_gluing}, the resulting polyhedron remains a Coxeter polyhedron. 

\medskip

We can also perform a more restricted construction under a weaker hypothesis. 

\begin{assumption} \label{1side_gluing}
  The polyhedron $P_1$ contains a facet isometric to a Coxeter polyhedron $R$ in $\HH^{d-1}$ and orthogonal to all adjacent facets, while $P_2$ contains two nonadjacent facets isometric to $R$  and orthogonal to all adjacent facets. 
\end{assumption}

Under this assumption we can form for each $n \ge 1$ a Coxeter polyhedron $L_n$ by gluing $n$ copies of $P_2$ and then capping one end with a copy of $P_1$. 

%%%%%%%%%%%%%%%

\subsection{Infinitely many commensurability classes}

\begin{prop} \label{commclass_garlands}
  Suppose $P_1, P_2$ are two compact quasi-arithmetic Coxeter polyhedra in $\HH^d$, $d \geq 3$, such that the $\Gamma_{P_i}$ have distinct ambient groups. Suppose that $P_1, P_2$ satisfy Assumption \ref{1side_gluing}. Then the $L_n$, $n \geq 1$, are pairwise incommensurable. 
\end{prop}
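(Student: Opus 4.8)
The plan is to argue by contradiction: assuming $\Gamma_{L_n}$ and $\Gamma_{L_m}$ are commensurable for some $n \neq m$, I will produce a numerical commensurability invariant whose value on $\Gamma_{L_n}$ is $n\,\vol(P_2)/\vol(P_1)$, which forces $n=m$. Write $\mathbf{G}_i$ for the (by hypothesis distinct) ambient group of $\Gamma_{P_i}$. Developing the reflection group $\Gamma_{L_n}$ tiles $\HH^d$ by copies of $L_n$, and refining this tiling by the gluing hypersurfaces (the translates of the common facet $R$) yields a $\Gamma_{L_n}$-invariant decomposition of $\HH^d$ into copies of $P_1$ and $P_2$. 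Colour a point of $\HH^d$ with the label $i \in \{1,2\}$ according to whether it lies in a $P_i$-copy, giving a partition $\HH^d = X_1 \sqcup X_2$ (up to measure zero); since wall-reflections carry each piece to a mirror copy of the same $P_i$, this two-colouring is $\Gamma_{L_n}$-invariant. The proposed invariant is $\iota(\Gamma_{L_n}) := \vol(X_2 \cap L_n)/\vol(X_1 \cap L_n)$. Replacing $\Gamma_{L_n}$ by a finite-index subgroup multiplies both volumes by the index, so $\iota$ is unchanged; and since $L_n$ is a fundamental domain containing exactly one $P_1$-piece and $n$ $P_2$-pieces, we get $\iota(\Gamma_{L_n}) = n\,\vol(P_2)/\vol(P_1)$. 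It remains to see that $\iota$ depends only on the commensurability class.

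The heart of the matter is to show that the two-colouring is intrinsic to the commensurability class, so that the colouring defined from $\Gamma_{L_n}$ and the one defined from $\Gamma_{L_m}$ (transported by the conjugating element) agree on $\HH^d$; granting this, the two computations of $\iota$ coincide and $n=m$ follows. Following Gromov--Piatetski-Shapiro \cite{GPS} and the last author \cite{croissmax}, I would characterise the colouring through its boundary, the \emph{fault hypersurfaces} separating the two colours, namely the $\Gamma_{L_n}$-translates of the interface $R_0$ between the $P_1$-cap and the adjacent $P_2$-piece. These are distinguished among all totally geodesic hypersurfaces carrying a cocompact $\Gamma_{L_n}$-stabiliser by the property that the two sides have \emph{different} ambient group: on the side of such a fault lying in a $P_1$-piece, the reflections of $\Gamma_{L_n}$ in the facets of that piece other than $R_0$ generate a Zariski-dense subgroup whose ambient group is $\mathbf{G}_1$ (by Theorem \ref{V} and Assumption \ref{1side_gluing}), while the other side similarly yields ambient group $\mathbf{G}_2$. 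Because each such local reflection group is determined up to commensurability by the class of $\Gamma_{L_n}$ and its ambient group is a commensurability invariant, any commensurability between $\Gamma_{L_n}$ and $\Gamma_{L_m}$ must carry faults to faults and preserve colours: were a fault carried to a hypersurface with matching ambient groups on both sides, Proposition \ref{similartothomson} (equivalently, Corollary \ref{qa_hybrid}) would produce a $k$-isogeny between $\mathbf{G}_1$ and $\mathbf{G}_2$, contradicting their distinctness.

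The main obstacle is precisely this last step, establishing that the fault hypersurfaces, and hence the colouring, are preserved by the commensurator. Two points are delicate: (i) giving a purely commensurability-theoretic characterisation of the faults that distinguishes genuine $\mathbf{G}_1 \mid \mathbf{G}_2$ faults both from the reflection walls of $\Gamma_{L_n}$ and from the $\mathbf{G}_2 \mid \mathbf{G}_2$ interfaces between consecutive $P_2$-pieces, across which the ambient group does not change; and (ii) applying the rigidity input to the relevant reflection subgroups, which are Zariski-dense but only geometrically finite rather than lattices in $\PO(d,1)$ — this is why Proposition \ref{similartothomson}, phrased for $k$-points rather than for lattices, is the appropriate tool, with Corollary \ref{qa_hybrid} underwriting the quasi-arithmetic adaptation of \cite{GPS, croissmax}. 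The hypotheses $d \geq 3$ and Assumption \ref{1side_gluing} supply the rigidity of totally geodesic hypersurfaces and the existence of the faults needed for (i).

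Once (i) and (ii) are in place, $\iota$ is a genuine commensurability invariant, so commensurability of $\Gamma_{L_n}$ and $\Gamma_{L_m}$ gives $n\,\vol(P_2)/\vol(P_1) = m\,\vol(P_2)/\vol(P_1)$ and hence $n=m$, proving that the $L_n$ are pairwise incommensurable.
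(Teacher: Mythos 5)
Your overall strategy is the paper's: use the distinctness of the ambient groups to show that the decomposition of $\HH^d$ into $P_1$- and $P_2$-pieces is intrinsic to the commensurability class, then conclude by a volume count. (Your ratio $\iota$ is equivalent bookkeeping to the paper's observation that, in a common finite orbifold cover $M$ of $L_n$ and $L_{n'}$, the preimages of the two $P_1$-caps coincide and hence, having equal volume $\vol(P_1)\cdot\deg$, force the two covering degrees and therefore the volumes of $L_n$ and $L_{n'}$ to agree.) The problem is the one you flag yourself: the claim that the colouring is a commensurability invariant is never proved --- your write-up is explicitly conditional on your points (i) and (ii). That conditional step is not a technicality; it is the entire content of the proposition. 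The paper supplies it by combining Corollary \ref{qa_hybrid} with Lemma 3.2 of \cite{croissmax}, whose proof is the Gromov--Piatetski-Shapiro cut-and-paste argument adapted to the quasi-arithmetic setting.

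Moreover, the route you sketch toward that step is shakier than the standard one. You propose to characterise the ``fault'' hypersurfaces intrinsically among all totally geodesic hypersurfaces with cocompact stabiliser and then argue that a commensurability ``must carry faults to faults''; but a commensurating element gives no a priori correspondence between the walls of the tiling by $L_n$ and the walls of the tiling by $L_m$, so there is nothing yet to carry, and distinguishing the genuine $\mathbf{G}_1\,|\,\mathbf{G}_2$ interfaces from the reflection walls and from the $\mathbf{G}_2\,|\,\mathbf{G}_2$ interfaces (your point (i)) is exactly the kind of bookkeeping this characterisation was meant to avoid. The argument that actually closes the loop works at the level of open regions rather than hypersurfaces: in a common finite cover $M$, if the $\pi$-preimage of the $P_1$-cap and the $\pi'$-preimage of a $P_2$-piece overlapped in a nonempty open set, the corresponding subgroups of $\pi_1^{\mathrm{orb}}(M)$ would have Zariski-dense intersection, and Corollary \ref{qa_hybrid} would force $\mathbf{G}_1$ and $\mathbf{G}_2$ to coincide --- the contradiction. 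Making ``open overlap implies Zariski-dense intersection'' precise is the substance of \cite[Lemma 3.2]{croissmax}; without it, or an equivalent, your proof does not go through. Once that lemma is granted, your volume-ratio conclusion is correct and matches the paper's.
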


\begin{proof}
Suppose there is a closed hyperbolic orbifold $M$ with covers $\pi : M \to L_n$, $\pi' : M \to L_{n'}$. Let $L_n^1$ (resp., $L_{n'}^1$) be the copy of $P_1$ in $L_n$ (resp., $L_{n'}$), thought of as a compact hyperbolic orbifold with connected totally geodesic boundary. By Proposition \ref{qa_hybrid} and Lemma 3.2 in \cite{croissmax}\footnote{The latter is stated only for manifolds in this reference, but the proof works as written for orbifolds.}, we have $\pi^{-1}(L_n^1) = \pi'^{-1}(L_{n'}^1)$. Since $L_n^1$ and $L_{n'}^1$ share the same volume, namely, that of $P_1$, it follows that $\pi$ and $\pi'$ share the same degree. But then $L_n$ and $L_{n'}$ share the same volume, and so $n=n'$.
\end{proof}

%%%%%%%%%%%%%%%

\subsection{Quantitative estimate for the number of commensurability classes}

If $\alpha \in \{1, 2\}^n$ we denote by $\bar\alpha$ its mirror sequence (that is, $\bar\alpha_i = \alpha_{n+1-i}$). The following proposition is similar to \cite[Proposition 2.1]{croissmax}, and we will deduce it from this result.  

\begin{prop} \label{commclass_garlands2}
  If $P_1, P_2 \subset \mathbb{H}^d$, $d \geq 3$, satisfying Assumption \ref{2side_gluing} are quasi-arithmetic with distinct ambient groups, then for any $n \ge 1$ and any sequences $\alpha , \beta \in \{1, 2\}^n$ such that $\Gamma_{P_\alpha}$ is commensurable with $\Gamma_{P_\beta}$, we have that $\beta$ or $\bar\beta$ is a subsequence of $(\alpha, \bar{\alpha})$. 
\end{prop}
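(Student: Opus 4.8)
The plan is to mimic the proof of Proposition \ref{commclass_garlands}, reducing the statement to the rigidity of the gluing decomposition and then reading off a combinatorial conclusion, and to package the rigidity via \cite[Proposition~2.1]{croissmax}. First I would translate commensurability of $\Gamma_{P_\alpha}$ and $\Gamma_{P_\beta}$ into a common finite cover: a closed hyperbolic orbifold $M$ with covering maps $\pi \colon M \to \mathcal{O}_\alpha$ and $\pi' \colon M \to \mathcal{O}_\beta$, where $\mathcal{O}_\gamma = \Gamma_{P_\gamma}\backslash \HH^d$. Inside each $\mathcal{O}_\gamma$ the internal gluing facets assemble into a finite family $\mathcal{W}_\gamma$ of disjoint two-sided totally geodesic hypersurfaces, each a copy of the reflection orbifold of $R$; cutting $\mathcal{O}_\gamma$ along $\mathcal{W}_\gamma$ recovers the constituent pieces $\mathcal{O}_{P_{\gamma_i}}$ in their linear arrangement, the $i$-th piece carrying the label $\gamma_i \in \{1,2\}$. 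Since $d \geq 3$, Mostow rigidity guarantees that these pieces and walls are geometrically honest and that the combinatorial data determine the geometry.

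The crux is to show that the two decompositions become \emph{compatible} after pulling back to $M$, namely that $\pi^{-1}(\mathcal{W}_\alpha) = \pi'^{-1}(\mathcal{W}_\beta)$, so that $M$ inherits a single decomposition into pieces, each of which covers simultaneously a $P_{\alpha_i}$-piece and a $P_{\beta_j}$-piece. This is exactly where the quasi-arithmeticity and distinct-ambient-group hypotheses enter, through Corollary \ref{qa_hybrid}: if a component $Q$ of the common refinement were to cover a $P_1$-type piece on the $\alpha$-side and a $P_2$-type piece on the $\beta$-side, then the fundamental group of $Q$ would be a Zariski-dense subgroup contained in both piece groups, and Corollary \ref{qa_hybrid} (as applied in \cite[Lemma~3.2]{croissmax}, which works verbatim for orbifolds) would force the ambient groups of $\Gamma_{P_1}$ and $\Gamma_{P_2}$ to coincide, contradicting our hypothesis. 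Hence labels match along the common decomposition, and the walls separating distinct types are canonical. I expect this coincidence-of-walls step to be the main obstacle: unlike in Proposition \ref{commclass_garlands}, where a single distinguished $P_1$-cap is pinned down, here both types may recur many times, so the rigidity must be applied uniformly to every interface, which is precisely the content abstracted in \cite[Proposition~2.1]{croissmax}.

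Finally I would extract the word-combinatorial conclusion, which is where the doubled sequence $(\alpha,\bar\alpha)$ appears. The two free outer $R$-walls of a garland are mirrors of $\Gamma_{P_\alpha}$, and developing across one of them reflects the linear pattern of pieces: the double of $P_\alpha$ along an outer wall realises exactly the length-$2n$ palindromic label pattern $(\alpha,\bar\alpha)$. Through the common cover $M$, the labelled path of $\beta$-pieces must therefore embed, respecting adjacency and labels, into this reflected pattern; depending on the orientation induced by $\pi$ and $\pi'$ one obtains $\beta$ itself or its reverse $\bar\beta$ as a subsequence of $(\alpha,\bar\alpha)$. Since the hypotheses on $P_1, P_2$—quasi-arithmeticity together with distinct ambient groups supplying the rigidity input via Corollary \ref{qa_hybrid}—are exactly those demanded by \cite[Proposition~2.1]{croissmax}, in practice I would invoke that proposition directly once the ambient-group input is in place, yielding the stated subsequence relation.
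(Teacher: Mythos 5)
Your proposal is correct and follows essentially the same route as the paper: the distinct-ambient-groups hypothesis enters through the wall-matching rigidity of Corollary \ref{qa_hybrid}, and the combinatorial conclusion is delegated to \cite[Proposition~2.1]{croissmax} applied to the doubled palindromic pattern $(\alpha,\bar\alpha)$. The one step the paper makes more precise than your sketch is the unfolding across the outer mirrors: rather than doubling along a single outer wall, it doubles $P_\alpha$ along \emph{both} outer $R$-facets simultaneously, by passing to the kernel of the composition $\Gamma_{P_\alpha} \twoheadrightarrow \langle\sigma_1,\sigma_n\rangle \to \ZZ/2\ZZ$, which yields a closed orbifold $M_\alpha$ cyclically decomposed into $2n$ pieces labelled by $(\alpha,\bar\alpha)$ and hence lands literally in the setting of \cite[Proposition~2.1]{croissmax}.
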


An immediate consequence is the following more precise version of Proposition~\ref{commclass_garlands}. 

\begin{cor} \label{counting}
  There are at least $\tfrac{2^n}{2n}$ distinct commensurability classes among those of the $P_\alpha$ for $\alpha \in \{1, 2\}^n$. 
\end{cor}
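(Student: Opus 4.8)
The plan is to derive Corollary~\ref{counting} from Proposition~\ref{commclass_garlands2} by a pigeonhole argument. Commensurability is an equivalence relation, so it partitions the $2^n$ reflection groups $\Gamma_{P_\alpha}$, $\alpha \in \{1,2\}^n$, into commensurability classes (the hypotheses of Proposition~\ref{commclass_garlands2} being in force, as they are in particular for Makarov's polyhedra by the computations of Section~\ref{desc_polytopes}). If I can show that each such class contains at most $2n$ of the sequences $\alpha$, then there must be at least $2^n/(2n)$ classes. Thus the entire task reduces to bounding, for a fixed $\alpha$, the number of $\beta \in \{1,2\}^n$ with $\Gamma_{P_\beta}$ commensurable to $\Gamma_{P_\alpha}$.

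Here the substantive input is Proposition~\ref{commclass_garlands2}, which tells me that any such $\beta$ has the property that $\beta$ or $\bar\beta$ occurs as a length-$n$ consecutive subword of the word $w := (\alpha, \bar\alpha) \in \{1,2\}^{2n}$. The key combinatorial observation is that $w$ is a palindrome: since $\bar\alpha_i = \alpha_{n+1-i}$, one has $w = \alpha_1\cdots\alpha_n\,\alpha_n\cdots\alpha_1$, which visibly equals its own reversal. Consequently the set of length-$n$ factors of $w$ is closed under reversal, so the condition ``$\beta$ or $\bar\beta$ is a factor of $w$'' collapses to the condition ``$\beta$ is a factor of $w$''.

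To finish, I note that a word of length $2n$ has at most $(2n)-n+1 = n+1$ factors of length $n$, one for each starting position $1,\dots,n+1$. Hence the commensurability class of $\Gamma_{P_\alpha}$ meets $\{1,2\}^n$ in at most $n+1$ sequences, and since $n+1 \le 2n$ for every $n \ge 1$, in at most $2n$ sequences. Pigeonhole over the $2^n$ total sequences then yields at least $2^n/(2n)$ commensurability classes, as claimed.

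I do not anticipate a genuine obstacle, as all the weight rests on Proposition~\ref{commclass_garlands2}. The one point requiring care is the palindrome observation: without it, the naive reading ``$\beta$ is a factor \emph{or} $\bar\beta$ is a factor'' only gives the bound $2(n+1) = 2n+2$ on the class size, which exceeds $2n$ and would therefore fail to force at least $2^n/(2n)$ classes; using that $w$ is its own reversal is exactly what sharpens this to $n+1 \le 2n$.
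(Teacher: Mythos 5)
Your proof is correct and matches the argument the paper leaves implicit when it calls the corollary an ``immediate consequence'' of Proposition~\ref{commclass_garlands2}: a pigeonhole count bounding the number of $\beta\in\{1,2\}^n$ in the commensurability class of a fixed $\alpha$. Your palindrome observation (that $(\alpha,\bar\alpha)$ equals its own reversal, so ``$\beta$ or $\bar\beta$ is a factor'' reduces to ``$\beta$ is a factor'') is exactly the point needed to get the class size down to $n+1\le 2n$ rather than $2n+2$, and in fact yields the sharper bound $2^n/(n+1)$, consistent with the paper's subsequent Remark.
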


\begin{proof}[Proof of Proposition \ref{commclass_garlands2}]
  Let $n \ge 1$ and $\alpha \in \{1, 2\}^n$. Let $\sigma_1, \sigma_n$ be the reflections in the facets $\pl^- P_{\alpha_1}$ and $\pl^+ P_{\alpha_n}$. Since these facets are nonadjacent and orthogonal to all adjacent facets, there is a surjective morphism $\varphi$ from $\Gamma_{P_\alpha}$ to the infinite dihedral group $\langle \sigma_1, \sigma_n\rangle$. Let $\eps$ be the morphism $\langle \sigma_1, \sigma_n\rangle \to \ZZ/2\ZZ$ sending both $\sigma_1$ and $\sigma_n$ to the generator. We define $M_\alpha$ be the double orbifold cover of $P_\alpha$ corresponding to the morphism $\eps \circ \varphi$. 

  Topologically, the orbifold $M_\alpha$ is obtained by first viewing $P_\alpha$ as an orbifold with totally geodesic boundary $\pl^- P_{\alpha_1}\cup \pl^+ P_{\alpha_n}$ (so that the orbifold fundamental group of $P_\alpha$ is generated by the reflections in all facets of $P_\alpha$ apart from $\pl^- P_{\alpha_1}$ and $\pl^+ P_{\alpha_n}$), and then doubling along $\pl^- P_{\alpha_1}\cup \pl^+ P_{\alpha_n}$. This is the setting of \cite{croissmax}, so if some $P_\beta$ is commensurable with $P_\alpha$ (so that $M_\alpha, M_\beta$ are commensurable to each other as well) we can apply Proposition 2.1 in loc. cit. to conclude that the sequences $(\alpha,\bar \alpha)$  and $(\beta, \bar\beta)$ are cyclic permutations of one another (these sequences are both palindromic so we need not apply a symmetry). In particular $\beta$ or $\bar\beta$ is a subsequence of $(\alpha, \bar\alpha)$. 
\end{proof}

\begin{remark}
  Under the conditions of Proposition \ref{commclass_garlands2}, we may in fact conclude that~$\beta$ on the nose is a subsequence of $(\alpha, \bar{\alpha})$, so that it is even true that the $P_\alpha$ represent at least $\frac{2^n}{n}$ distinct commensurability classes as $\alpha$ varies within $\{1,2\}^n$. Indeed, if $(\alpha, \bar\alpha)$ and $(\beta, \bar\beta)$ are cyclic permutations of one another and $\beta$ is neither $\alpha$ nor $\bar \alpha$, then $(\alpha, \bar{\alpha})$ is invariant under some nontrivial dihedral group of permutations, and in particular, under some (possibly trivial) cyclic permutation shifting $\beta$ into $(\alpha, \bar\alpha)$. 
\end{remark}

%%%%%%%%%%

\subsection{Proof of the main results}

In the notation of Section \ref{4d_Makarov}, the polyhedra $P_1 = P_1^4$ and $P_2 = P_2^4$ each have two nonadjacent facets isometric to $R= T_2$ and orthogonal to all adjacent facets, and hence satisfy  Assumption \ref{2side_gluing}. Since $\Gamma_{P_1^4}$ and $\Gamma_{P_2^4}$ are both quasi-arithmetic with distinct ambient groups, the conditions of Proposition \ref{commclass_garlands2} are satisfied, and Theorem \ref{quantitative} follows from Corollary \ref{counting}.

In the notation of Section \ref{5d_Makarov}, we can take $P_1 = P_1^5$ and $P_2 = P_2^5$. Then $P_1, P_2$ satisfy Assumption \ref{1side_gluing}, and the other assumptions of Proposition \ref{commclass_garlands}. This proves the case $d=5$ of Theorem \ref{Main}.

%%%%%%%%%%%%%%%%%%%%%%%%%%%%%%

\subsection{Proof of Theorem \ref{non-quasi-arit}}

The proof is similar to that of Corollary \ref{qa_hybrid}. Let $i=4,5$ and let $P$ be a garland containing copies of both $P_1^i$ and $P_2^i$. Assume that $\Gamma_P$ is quasi-arithmetic; then there exists a totally real $k$-group $\G$ which is admissible for $\PO(d, 1)$ and such that $\Gamma_P \subset \G(k)$. Since $\Gamma_P$ contains Zariski-dense subgroups of both $\Gamma_{P_j^i}$, the field $k$ contains $\QQ(\sqrt 5)$ and there are $k$-isogenies $\G_j \to \G$ for $j=1, 2$. Since $\G$ is admissible, we have that $k=\QQ(\sqrt 5)$, and it follows that $\G, \G_1, \G_2$ are all $k$-isogenous, which is not the case.

%%%%%%%%%%%%%%%%%%%%%%%%%%%%%%

\bibliographystyle{siam}
\bibliography{bib}

\end{document}